\numberwithin{algorithm}{section}
\crefname{equation}{}{}
\def\R{{\mathbb R}}
\def\C{{\mathbb C}}
\def\N{{\mathbb N}}
\def\S{{\mathbb S}}
\def\Id{\boldsymbol{I}}
\def\ideal{{\mathcal I}}
\newcommand{\norm}[1]{\left\|#1\right\|}
\newcommand{\mat}[1]{\boldsymbol{#1}}
\renewcommand{\vec}[1]{\boldsymbol{#1}}
\newtheorem{lemma}{Lemma}[section]
\newtheorem{definition}{Definition}[section]
\newtheorem{corollary}{Corollary}[section]
\newtheorem{theorem}{Theorem}[section]
\newtheorem{proposition}{Proposition}[section]
\newtheorem{remark}{Remark}[section]
\newtheorem{example}{Example}[section]
\title{Spectral Methods for Polynomial Optimization}
\author{
  Elvira Moreno\textsuperscript{\textdagger} and 
  Venkat Chandrasekaran\textsuperscript{\textdagger,\textdaggerdbl}
}
\date{
  \textsuperscript{\textdagger}Department of Computing and Mathematical Sciences \\
  \textsuperscript{\textdaggerdbl}Department of Electrical Engineering \\
  California Institute of Technology \\
  Pasadena, CA 91125 \\
  \vspace{1ex}
  July 2025
}
\begin{document}

\maketitle

\begin{abstract} We present a hierarchy of tractable relaxations to obtain lower bounds on the minimum value of a polynomial over a constraint set defined by polynomial equations. In contrast to previous convex relaxation techniques for this problem, our method is based on computing the smallest generalized eigenvalue of a pair of matrices derived from the problem data, which can be accomplished for large problem instances using off-the-shelf software. We characterize the algebraic structure in a problem that facilitates the application of our framework, and we observe that our method is applicable for all polynomial optimization problems with bounded constraint sets. Our construction also yields a nested sequence of structured convex outer approximations of a bounded algebraic variety with the property that linear optimization over each approximation reduces to an eigenvalue computation. Finally, we present numerical experiments on representative problems in which we demonstrate the scalability of our approach compared to convex relaxation methods derived from sums-of-squares certificates of nonnegativity.


\end{abstract}

{\bf{Key words} }
convex optimization, generalized eigenvalues, semidefinite programming, sums-of-squares


\section{Motivation}\label{sec: Introduction}


We consider the problem of minimizing a real polynomial $p$ in $n$ variables over the common zero set of a finite collection of real $n$-variate polynomials $g_1,\dots,g_{\ell}$:
\begin{equation}\label{POP_init}
\nu\coloneq\min_{\vec{x}\in\R^n} \;\; p(\vec{x}) \;\; \mbox{s.t.} \;\;  g_1(\vec{x})=\dots=g_{\ell}(\vec{x})=0.
\end{equation}
The formulation \cref{POP_init} describes a general polynomial optimization problem (POP), as it accommodates polynomial inequality constraints of the form $g(\vec{x})\geq 0$ by introducing an extra variable $y$ and enforcing $g(\vec{x})=y^2$. POPs include linear programs, quadratic programs, and combinatorial optimization problems as special cases, and they have numerous applications~\cite{Ahmadi2015applications, hall2019engineeringbusinessapplicationssum, Lasserre2015, moniqueAppl, Parrilo2012Applic}. 

As the POP formulation \cref{POP_init} encompasses NP-hard problems such as maximum-cut~\cite{Karp1972}, we should not expect to solve it in a computationally efficient manner in general. This has motivated the development of tractable relaxations to bound the optimal value $\nu$. Perhaps the most prominent approach to deriving such relaxations is the moment/sums-of-squares (SOS) hierarchy~\cite{Lasserre2001, ParriloTesis, Parrilo2003}, which yields a sequence of semidefinite programming (SDP) problems whose optimal values converge to $\nu$ from below under suitable assumptions. While this method has strong theoretical guarantees and is computationally tractable in principle, its applicability is limited to modest-sized problems due to the well-documented scalability challenges associated with solving large SDPs~\cite{sdpScalable,WangRegularization}. This limitation motivates the development of alternative tractable and scalable relaxations for POPs. 

\subsection{Contributions} \label{sec: Outline}

In this paper, we establish lower bounds on \cref{POP_init} by leveraging eigenvalue problems as a computational primitive. The \emph{generalized eigenvalue problem} (GEP) for matrices $\mat{A}$ and $\mat{B}$ is to identify $\lambda \in \C$ and $\vec{v} \in \C^d$ such that:
\begin{equation}\label{GEP}
\mat{A}\vec{v}=\lambda \mat{B}\vec{v},\; \vec{v}\neq 0. \end{equation}
The solutions $\lambda\in \C$ and $\vec{v}\in\C^d$ to \cref{GEP} are termed \emph{generalized eigenvalues} and \emph{eigenvectors}, respectively.  Generalized eigenvalues for \cref{GEP} need not exist, and when they exist, they need not be real or even finite. Nonetheless, when $\mat{A}$ is symmetric and $\mat{B}$ positive definite, all solutions $\lambda$ to \cref{GEP} are real and can be efficiently computed using reliable off-the-shelf solvers~\cite{Anderson1999, Lehoucq1998}. Moreover, under these conditions, the minimum generalized eigenvalue of $\mat{A}$ and $\mat{B}$ admits the following SDP interpretation:
    \begin{equation} \label{OP: SR}
        \lambda_{\min}(\mat{A},\mat{B})= \max_{\gamma\in\R} \;\; \gamma \; \; \mbox{s.t.}\;\; \mat{A}-\gamma \mat{B} \succeq 0.
    \end{equation}  
Our approach is to design problems of the form \cref{OP: SR} with $\mat{A}$ symmetric and $\mat{B}\succ0$, such that the associated optimal value is a lower bound on the optimal value $\nu$ of the POP~\cref{POP_init}. As 
\cref{OP: SR} can be reliably computed using eigenvalue solvers for much larger problem sizes than is feasible with general-purpose SDP solvers, this approach yields bounds on POPs that are beyond the reach of the moment–SOS hierarchy (see \cref{sec: Experiments}). In keeping with their connection to eigenvalue problems, we refer to the formulations \cref{OP: SR} as \emph{spectral relaxations} and to the bounds that they produce as \emph{spectral bounds}.

In \cref{sec: Deriving Spectral Relaxations}, we begin by describing algebraic conditions on the constraint set and the objective polynomial of the POP~\cref{POP_init} that facilitate a natural approach for identifying matrices $\mat{A}$ and $\mat{B}\succ 0$ such that $\lambda_{\min}(\mat{A},\mat{B})\leq \nu$. Moreover, we present a hierarchy of such spectral relaxations, given by a sequence of matrices $\mat{A_k}$ and $\mat{B_k}\succ0$, typically of growing size, such that the corresponding minimum generalized eigenvalues $\lambda_{\min}(\mat{A_k},\mat{B_k})$ yield a non-decreasing sequence of lower bounds on $\nu$.


In \cref{sec: SR amenable ideals}, we build on the analysis in \cref{sec: Deriving Spectral Relaxations} to characterize the constraint sets of \cref{POP_init} for which our spectral relaxations can be obtained for any objective polynomial $p$. We present problem families arising in various application domains that satisfy these criteria and are well-suited to our framework. In particular, we discuss how any POP~\cref{POP_init} with bounded constraint set is amenable to our method.



In \cref{sec: Dual Perspective} we study spectral relaxations from a dual perspective.  Duals of the problem \cref{OP: SR} are given by convex programs in which linear functionals are optimized over projected spectrahedra that have a particular structure.  Specifically, these constraint sets are described as linear images of a base of the cone of positive-semidefinite matrices, which we call \emph{spectratopes}.  This dual viewpoint also yields spectratope outer approximations to the convex hull of an algebraic variety (i.e., the solution set of a system of polynomial equations), which we compare with previous outer approximations obtained from SOS-methods~\cite{thetabodies}.



In \cref{sec: Experiments}, we present numerical experiments highlighting the performance of our framework on three problems -- computing the maximum-cut value of a graph, estimating the distance to a variety, and computing the spectral norm of a tensor.  While SOS methods yield tighter bounds, our results show that spectral relaxations provide a compelling alternative under memory and time constraints, as they scale to much larger problem instances and produce good-quality bounds significantly faster. 

Finally, we note that our development relies on some basic concepts from real algebraic geometry, and we provide the relevant background in \cref{sec: Background}.

\subsection{Related Work}\label{sec: Related Work} 

Numerous previous efforts have addressed the scalability challenges associated with SDP relaxations for POPs. \cref{sec: SDP scalability} provides a general overview. \cref{sec: spectral hierarchies} focuses on methods that are based on eigencomputations.

\subsubsection{Addressing Scalability Limitations of SDP-Based Relaxations}\label{sec: SDP scalability} 

A substantial body of work focuses on addressing the scalability limitations of SDPs arising from relaxations of POPs~\cite{sdpScalable}. In particular, recent methods derive moment-SOS-based SDP relaxations, whose solution is based on leveraging eigenvalue computations (in the spirit of spectral bundle methods \cite{Helmberg2000}), rather than interior-point methods~\cite{Mai2023}. Unlike our framework, these methods still aim to solve SDPs. A second line of work focuses on designing relaxations for POPs that replace SDPs derived from SOS certificates (see \cref{sec: Background}) with alternative optimization formulations.  The paper~\cite{Ahmadi2019} introduced the so-called diagonally-dominant sums-of-squares (DSOS) and scaled-diagonally-dominant sums-of-squares (SDSOS) hierarchies of linear programming (LP) and second-order cone programming (SOCP) relaxations for POPs.  Yet another class of methods derive relaxations based on relative entropy optimization and geometric programming \cite{Chandrasekaran2016, Dressler2017, Katthn2021, Murray2020, Murray2021}, with the underlying nonnegativity certificates being grounded in the arithmetic-geometric-mean inequality.   In contrast, our framework relies on a single eigenvalue computation to obtain a lower bound on the optimal value of the POP~\cref{POP_init}.  As eigenvalue solvers can scale to much larger problem sizes than convex optimization solvers, the methods we describe in this paper are especially relevant for POPs involving many variables.  Moreover, despite bypassing the need to use an optimization solver, our spectral relaxations produce bounds comparable to or better than those obtained from first-level DSOS and SDSOS relaxations for certain families of maximum-cut and tensor spectral norm problem instances (see Sections \ref{sec: Max-Cut} and  \ref{sec:TSN}).


\subsubsection{Spectral Hierarchies for Polynomial Optimization} \label{sec: spectral hierarchies}

Most relevant to our development is recent work by Lovitz and Johnson~\cite{lovitz2024hierarchyeigencomputationspolynomialoptimization}, which presents a convergent hierarchy of spectral relaxations for minimizing a homogeneous polynomial over the Euclidean sphere, and more generally, for minimizing the inner product between a real tensor and a unit symmetric product tensor. Our hierarchy applies to a broader family of POPs, and specializes to the construction in~\cite{lovitz2024hierarchyeigencomputationspolynomialoptimization} for the case of homogeneous optimization over the sphere, recovering the same sequence of bounds for this class of problems (see \cref{sec: Hierarchy}).  Finally, a hierarchy of spectral upper bounds for polynomial minimization over certain structured families of sets (e.g., $\R^n$, $\R_+^n$, boxes, ellipsoids, or simplices) was proposed in~\cite{Lasserre2011}.  In contrast, our hierarchy yields lower bounds and can be applied to POPs with bounded constraint sets.



\section{Background}\label{sec: Background}

We present here some key concepts from real algebraic geometry. We refer the reader to~\cite{idealsVarAlg} for a more extensive treatment of this topic.

The \emph{polynomial ring} $\R[\vec{x}]$ consists of all polynomials in the indeterminates $\vec{x}=(x_1,\dots,x_n)$ with real coefficients. We are concerned with minimizing a polynomial $p\in\R[\vec{x}]$ over the common set of real zeros of a finite list of polynomials $g_1, \dots, g_{\ell}\in \R[\vec{x}]$. In this context, it is useful to develop methods that depend solely on the constraint set, rather than on the specific polynomials $g_1, \dots, g_{\ell}$ used to define it. The notion of an ideal provides the framework for addressing this challenge.

A subset $\ideal\subset\R[\vec{x}]$ is called an \emph{ideal} in $\R[\vec{x}]$ if it closed under addition and satisfies the following absorption property: $g\in \ideal, ~ f\in \R[\vec{x}] ~ \Rightarrow ~ fg\in \ideal$. Given polynomials $g_1, \dots, g_{\ell}\in \R[\vec{x}]$, the set $\langle g_1, \dots, g_{\ell} \rangle\coloneqq\{f_1g_1+\dots+ f_{\ell}g_{\ell} : f_1,\dots,f_{\ell}\in\mathbb{R}[\vec{x}]\}$ of all `polynomial combinations' of $g_1,\dots,g_{\ell}$ is an ideal, commonly referred to as the \emph{ideal generated by} $g_1, \dots, g_{\ell}$. The real (affine) variety of an ideal $\ideal\subset\R[\vec{x}]$ is the collection of points in $\R^n$ at which all of the polynomials in $\ideal$ simultaneously vanish, that is, $V_\R(\ideal)\coloneq\{\vec{x}\in\R^n: g(\vec{x})=0, \forall g\in \ideal\}.$ One can check that $V_\R(\langle g_1, \dots, g_{\ell} \rangle)=\{\vec{x}\in\R^n: g_1(\vec{x})=\dots=g_{\ell}(\vec{x})=0\}$. Further, Hilbert's basis theorem states that any ideal in a polynomial ring over a field is finitely generated~\cite{idealsVarAlg}. Together, these insights imply that the zero set of any finite collection of polynomials corresponds to the variety of an ideal, and conversely, that the variety $V_\R(\ideal)$ of an ideal $\ideal\subset\R[\vec{x}]$ can always be described as the zero set of finitely many polynomials.  Hence, polynomial ideals represent the appropriate algebraic abstraction for working with constraint sets defined by finitely many polynomial equations.

Minimizing a polynomial $p$ over the variety $V_\R(\ideal)$ of an ideal $\ideal \subset \R[\vec{x}]$ is equivalent to maximizing over lower bounds for $p$ over $V_\R(\ideal)$. This task is difficult for general $p$ and $\ideal$. In fact, checking whether $p-\gamma$ is nonnegative over $V_\R(\ideal)$ for a fixed $\gamma\in\R$ is already NP-Hard in general~\cite{NpcompleteMurty}. With a view to developing tractable sufficient conditions for certifying nonnegativity, we begin with a basic observation. To certify that a polynomial $f\in\R[\vec{x}]$ is nonnegative over $V_\R(\ideal)$, it suffices to identify another polynomial $q\in\R[\vec{x}]$ that is evidently globally nonnegative and for which $f-q\in\ideal$. Motivated by this fact, it is convenient to reason about polynomial optimization over $V_\R(\ideal)$ in terms of the \emph{quotient ring} $\R[\vec{x}]/\ideal$, which consists of all equivalence classes of polynomials in $\R[\vec{x}]$ with respect to the following equivalence relation defined by $\ideal$ over $\R[\vec{x}]$: two polynomials $f,q\in\R[\vec{x}]$ are said to be \emph{equivalent} \emph{mod $\ideal$}, denoted $f\equiv_\ideal q$, if their difference $f-q$ is an element of $\ideal$. The equivalence class of a polynomial $f \in \R[\vec{x}]$ is denoted $f + \ideal$.


Of particular relevance to our discussion is the property that $\R[\vec{x}]/\ideal$ has vector space structure over $\R$, where addition and scalar multiplication are defined as $(f+\ideal)+(q+\ideal)=(f+q)+\ideal$ and $\eta(f+\ideal)=\eta f+\ideal$ for $\eta\in\R$, respectively. In addition, $\R[\vec{x}]/\ideal$ is equipped with a bilinear product: $(f+\ideal)(q+\ideal)=fq+\ideal$, which renders it a \emph{unital $\R$-algebra}. This structure allows us to reason about subalgebras of $\R[\vec{x}]/\ideal$ generated by the equivalence classes mod $\ideal$ of a finite set of polynomials. Given $f_1, \dots, f_m\in\R[\vec{x}]$, the corresponding subalgebra consists of all finite $\R$-linear combinations of finite products of the elements $f_1 + \ideal, \dots, f_m + \ideal$, and we denote it by $\mbox{Alg}_\ideal(f_1, \dots, f_m)$.


As the dimension of $\R[\vec{x}]/\ideal$ (as a vector space) is often very large -- either infinite or finite but exponentially large in the problem dimension -- one is limited to searching for evidently nonnegative polynomials within fixed, modest-dimensional subspaces $U \subset \R[\vec{x}]/\ideal$. Given this consideration, a prominent approach to certifying nonnegative of a polynomial $f\in\R[\vec{x}]$ over $V_\R(\ideal)$ is based on SOS decompositions: a polynomial is SOS, and consequently globally nonnegative, if it can be expressed as $\sum_{i=1}^d{q_i^{2}(\vec{x})}$ for some polynomials $q_1, \dots, q_d\in\R[\vec{x}]$. The SOS approach proceeds in two steps. First, one identifies a vector of polynomials $\vec{z}=(z_1,\dots,z_d) \in \R[\vec{x}]^d$ such that $f+\ideal$ belongs to  the subspace $U$ of $\R[\vec{x}]/\ideal$ spanned by the products $z_i \cdot z_j + \ideal$ for $i,j = 1,\dots,d$. Then, one searches for a positive semidefinite \emph{Gram} matrix $\mat{M}\in\S^{d}$ for the polynomial $f$ with respect to the vector of quotient polynomials $\vec{z}+\ideal\in(\R[\vec{x}]/\ideal)^d$, that is, a matrix $\mat{M}\in\S^{d}_{+}$ such that $\vec{z}^T\mat{M} \vec{z}\equiv_\ideal f$. Given such an $\mat{M}$, along with a decomposition $\mat{M}=\mat{L}^T\mat{L}$, we have that $f$ is equivalent mod $\ideal$ to the SOS polynomial $\sum_{i=1}^{d}(\mat{L}\vec{z})_i^2$, which certifies its nonnegativity over $V_{\R}(\ideal)$. Altogether, we observe that implementing the SOS approach to certify polynomial nonnegativity over $V_{\R}(\ideal)$ amounts to solving an SDP feasibility problem. Moreover, one can obtain SDP-computable lower bounds for a polynomial $p$ over $V_\R(\ideal)$ by maximizing over scalars $\gamma$ for which nonnegativity of $p-\gamma$ over $V_\R(\ideal)$ can be certified using the SOS approach~\cite{ParriloTesis}. As SDPs can be computationally challenging to solve for large-scale problem instances, we present a more scalable alternative in \cref{sec: Deriving Spectral Relaxations}.

As a final remark, we note that obtaining a basis for $\R[\vec{x}]/\ideal$ and enabling operations modulo $\ideal$ requires the computation of a special set of generating polynomials for $\ideal$ called a Gröbner basis~\cite{RISC3775,idealsVarAlg}. While computing such a basis is difficult in the worst case, for many ideals appearing in applications a suitable choice is readily available (see \cref{sec: Experiments}) or can be found via existing tools from computational algebra~\cite{Singular,M2}. 

\paragraph{Notation} \label{sec: Notation}
 Bold symbols denote objects with multiple components, such as vectors, matrices, or tuples of polynomials. For a tuple $\vec{z} = (z_1, \dots, z_d) \in \R[\vec{x}]^d$ and an ideal $\ideal \subset \R[\vec{x}]$, we write $\vec{z} + \ideal\coloneq (z_1 + \ideal, \dots, z_d + \ideal)\in (\R[\vec{x}]/\ideal)^d$ to denote the corresponding tuple of equivalence classes mod $\ideal$. For $\vec{\alpha} \in \mathbb{N}^n$, we denote by $\vec{x}\coloneq x_1^{\alpha_1}\cdots x_n^{\alpha_n}$ the monomial with exponent vector $\vec{\alpha}$, and by $|\vec{\alpha}| := \sum_{i=1}^n \alpha_i$ its total degree. The set \(\mathbb{S}^d\) denotes the space of \(d \times d\) real symmetric matrices; \(\mathbb{S}_+^d\) and \(\mathbb{S}_{++}^d\) denote the cones of positive semidefinite and positive definite matrices, respectively.  We use \([s]\) to denote the index set $\{1, \dots, s\}$. The notation $\norm{\cdot}$ refers to the Euclidean norm on $\R^n$, and $\norm{\cdot}_F$ denotes the Frobenius norm of a matrix.

\section{Deriving Spectral Relaxations} \label{sec: Deriving Spectral Relaxations}

The POP~\cref{POP_init} can be rewritten as:
\begin{equation}\label{POP}
    \nu ~ := ~ \min_{\vec{x} \in \R^n} ~ p(\vec{x}) ~ \mathrm{s.t.} ~ \vec{x} \in V_\R(\ideal),
\end{equation}
where $\ideal\coloneq\langle g_1, \dots, g_{\ell}\rangle$ denotes the ideal generated by the constraints of \cref{POP_init}.  As discussed in the introduction, our goal is to formulate a spectral relaxation \cref{OP: SR} for \cref{POP}, thereby producing a lower bound for $\nu$ of the form $\lambda_{\min}(\mat{A},\mat{B})$, where $\mat{A}$ and $\mat{B}$ are symmetric matrices with $\mat{B}\succ 0$.  

In \cref{sec: Identifying SRs}, we establish sufficient conditions on the polynomial $p$ and the ideal $\ideal$ that facilitate the formulation of spectral relaxations for \cref{POP}, and in \cref{sec: Initial SR}, we present a systematic approach for deriving one. \cref{sec: Hierarchy} then describes how to extend a given spectral relaxation for \cref{POP} into a hierarchy of such relaxations, resulting in a non-decreasing sequence of spectral lower bounds on $\nu$.

\subsection{Conditions for the Existence of a  Spectral Relaxation} \label{sec: Identifying SRs}


We begin by presenting a natural approach for identifying a spectral relaxation for the POP~\cref{POP} defined by the polynomial $p$ and the ideal $\ideal$.  Suppose we have a vector of polynomials $\vec{z}\in \R[\vec{x}]^d$ such that there exist Gram matrices $\mat{M}(1)\in \S^d_{++}$ and $\mat{M}(p)\in \S^d$ for the polynomials $1$ and $p$, respectively, with respect to the vector of quotient polynomials $\vec{z}+\ideal\in(\R[\vec{x}]/\ideal)^d$ (see \cref{sec: Background}).  Then for $\gamma\in\R$, we have the equivalence $p-\gamma\equiv_\ideal \vec{z}^T(\mat{M}(p)-\gamma\mat{M}(1))\vec{z}$, which yields the following spectral relaxation for \cref{POP}: \begin{equation}\lambda_{\min}(\mat{M}(p), \mat{M}(1))=\max_{\gamma\in\R} \;\; \gamma \; \; \mbox{s.t.}\;\; \mat{M}(p)-\gamma \mat{M}(1)\succeq 0.\label{OP: SR for POP}\end{equation}
Indeed, the condition $\mat{M}(p)-\gamma \mat{M}(1)\succeq0$ implies that $p-\gamma$ is equivalent mod $\ideal$ to a SOS polynomial, which certifies nonnegativity of $p-\gamma$ on $V_{\R}(\ideal)$ and establishes $\gamma$ as a lower bound on $p$ over the variety (see \cref{sec: Background}). The desired inequality, $\lambda_{\min}(\mat{M}(p), \mat{M}(1))\leq\nu$, follows. 

\begin{remark}\label{Rem: SOS relax} Building on the discussion in \cref{sec: Background}, we can associate to each spectral relaxation \cref{OP: SR for POP} a corresponding SOS-based SDP relaxation:
\begin{equation}\label{OP: SOS relax.}
    \max_{\gamma\in\R, \mat{Y}\in \S^d}\; \gamma \;\; \mbox{s.t.} \;\; p(\vec{x})-\gamma\equiv_\ideal \vec{z}^T\mat{Y} \vec{z}, \; \mat{Y}\succeq 0.
\end{equation}
Since any feasible $\gamma$ for \cref{OP: SR for POP} induces the feasible pair $(\gamma,\mat{M}(p)-\gamma\mat{M}(1))$ for \cref{OP: SOS relax.}, the SOS relaxation yields a tighter lower bound on $\nu$.  We reiterate, however, that the problem \cref{OP: SR for POP} can be solved using a generalized eigenvalue solver, whereas \cref{OP: SOS relax.} requires a more expensive SDP solver.
\end{remark}

The following example illustrates the approach for identifying spectral relaxations described above.

\begin{example} \label{ex: spectral bound over HC} \textit{Consider the ideal $\langle x_1^2-1, x_2^2-1,  x_3^2-1 \rangle$, whose variety $V_\R(\ideal)=\{\pm 1\}^3$ corresponds to the three-dimensional hypercube. The minimum value $\nu$ of the polynomial $p(x_1,x_2,x_3)=2x_1^2+x_1x_2-5x_2^2-2x_2x_3+3x_1-2x_3+12$ on $V_\R(\ideal)$ equals $1$.  To derive a spectral lower bound on $\nu$, consider the vector of polynomials $\vec{z}\coloneq(1,x_1,x_2,x_3)$ and note that $\vec{z}^T\vec{z}\equiv_\ideal 4$, so the normalized $4\times 4$ identity matrix $\frac{1}{4}\mat{I}$ is a positive definite Gram matrix for the polynomial $1$ with respect to $\vec{z}+\ideal$. Moreover, one can check that the matrix \begin{equation*}\mat{M}(p)\coloneq\frac{1}{4}{\begin{bmatrix}
9 & 6 & 0 & -4 \\
6 & 9 & 2 & 0 \\
0 & 2 & 9 & -4\\
-4 & 0 & -4 & 9\\
\end{bmatrix}}\end{equation*} is such that $p\equiv_\ideal\vec{z}^T\mat{M}(p)\vec{z}$. It follows that $\lambda_{\min}(\mat{M}(p),\frac{1}{4}\mat{I})\approx 0.74\leq\nu$. Note that $\mat{M}(p)$ is one of infinitely many Gram matrices for $p$ with respect to $\vec{z}+\ideal$.  We present methods for choosing $\mat{M}(p)$ in \cref{sec: Initial SR}.}
\end{example}

The above discussion raises the following fundamental question: given a polynomial $p$ and an ideal $\ideal$, when does there exist a vector $\vec{z}\in \R[\vec{x}]^d$ such that the polynomials $1$ and $p$ admit Gram matrices $\mat{M}(1)\in \S^d_{++}$ and $\mat{M}(p)\in \S^d$, respectively, with respect to $\vec{z}+\ideal$? To address this question, we present the following definition.
 
\begin{definition}\label{def: I-spherical polynomials} Let $\ideal\subset\R[\vec{x}]$ be an ideal. We say that a finite collection of polynomials $h_1, \dots, h_m \in \R[\vec{x}]$ is \emph{$\ideal$-spherical} if $\sum_{i=1}^m h_i^2\equiv_\ideal 1$. \end{definition}

A set of $\ideal$-spherical polynomials provides a non-trivial SOS representation for the polynomial $1$. This property is central to our development, as any such representation enables us to identify additional subsets of quotient polynomials with respect to which $1$ admits a positive definite Gram matrix.

Given a vector $\vec{h}=(h_1,\dots,h_m)\in\R[\vec{x}]^m$ of $\ideal$-spherical polynomials, observe that the $m$-dimensional identity is a Gram matrix for the polynomial $1$ with respect to $\vec{h}+\ideal$. Consequently, if there exists a Gram matrix $\mat{M}(p)$ for $p$ with respect to $\vec{h}+\ideal$, then the POP~\cref{POP} admits a spectral relaxation with associated bound $\lambda_{\min}(\mat{M}(p))\leq\nu$. This was the case for \cref{ex: spectral bound over HC}, with the choice of $\ideal$-spherical polynomials $\frac{1}{\sqrt{4}}\{1,x_1,x_2,x_3\}$. Nonetheless, for fixed  $p$ and $h_1,\dots,h_m$, the above provision does not typically hold. For instance, the same set of $\ideal$-spherical polynomials from \cref{ex: spectral bound over HC} does not admit a Gram matrix for the cubic polynomial $p(x_1,x_2,x_3)=x_1x_2x_3+3x_1x_2-4x_3$, as $p$ contains the term $x_1x_2x_3$ which cannot be expressed (modulo $\ideal$) as a linear combination of products of pairs from $\{1,x_1,x_2,x_3\}$. We turn next to a less restrictive condition.

For $k\in \N$, observe that $1\equiv_\ideal (\sum_{i=1}^m h_i^2)^k=(\vec{h}^{\otimes k})^T\vec{h}^{\otimes k}$, so the $m^k$-dimensional identity is a Gram matrix for $1$ with respect to the vector $\vec{h}^{\otimes k}+\ideal$.  To obtain a spectral relaxation for minimizing $p$ over $V_\R(\ideal)$, it thus suffices for there to exist some $k\in \N$ for which $p+\ideal$ lies in the subspace of $\R[\vec{x}]/\ideal$ spanned by $\vec{h}^{\otimes 2k}+\ideal$. Indeed, this would imply the existence of some Gram matrix $\mat{M}(p)\in\S^{m^k}$ for $p$ with respect to $\vec{h}^{\otimes k}+\ideal$, and $\lambda_{\min}(\mat{M}(p))$ would constitute a lower bound for $p$ over $V_\R(\ideal)$.  This discussion leads to our first result. We show that the approach described earlier in this subsection can be used to formulate spectral relaxations for minimizing $p$ over $V_\R(\ideal)$ whenever there exist $\ideal$-spherical polynomials $h_1, \dots, h_m\in \R[\vec{x}]$ such that $p+\ideal\in \text{Alg}_\ideal (h_1, \dots, h_m)$.


\begin{lemma} \label{lemma: sufficient condition}  Let $p\in\R[\vec{x}]$ and $\ideal\subset\R[\vec{x}]$ be an ideal. Suppose there exist $\ideal$-spherical polynomials $h_1,\dots, h_m\in \R[\vec{x}]$ such that $p+\ideal\in\text{Alg}_\ideal(h_1, \dots, h_m)$. Then, there exist a vector of polynomials $\vec{z}\in \R[\vec{x}]^d$ and matrices $\mat{M}(1), \mat{M}(p)\in\S^d$ satisfying $\mat{M}(1)\succ 0$, $1\equiv_\ideal \vec{z}^T\mat{M}(1)\vec{z}$, and $p \equiv_\ideal \vec{z}^T\mat{M}(p)\vec{z}$. \end{lemma}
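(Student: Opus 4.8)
The plan is to construct $\vec{z}$, $\mat{M}(1)$, and $\mat{M}(p)$ explicitly from the $\ideal$-spherical tuple $\vec{h}=(h_1,\dots,h_m)$ and a representation of $p$ inside $\text{Alg}_\ideal(h_1,\dots,h_m)$. Writing $h^{\vec{\beta}}:=h_1^{\beta_1}\cdots h_m^{\beta_m}$ for $\vec{\beta}\in\N^m$, the hypothesis $p+\ideal\in\text{Alg}_\ideal(h_1,\dots,h_m)$ means $p\equiv_\ideal\sum_{\vec{\beta}\in S}c_{\vec{\beta}}\,h^{\vec{\beta}}$ for some finite $S\subset\N^m$ and scalars $c_{\vec{\beta}}\in\R$. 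Fix $k\in\N$ with $2k\geq\max_{\vec{\beta}\in S}|\vec{\beta}|$. The $\ideal$-spherical condition $\sum_i h_i^2\equiv_\ideal 1$ gives, for every $j$, the identity $(\vec{h}^{\otimes j})^T\vec{h}^{\otimes j}=(\sum_i h_i^2)^j\equiv_\ideal 1$, and the entries of $\vec{h}^{\otimes j}$ are exactly the polynomials $h^{\vec{\gamma}}$ with $|\vec{\gamma}|=j$ (listed with multiplicity). I would then set $\vec{z}:=(1,\vec{h},\vec{h}^{\otimes 2},\dots,\vec{h}^{\otimes k})\in\R[\vec{x}]^d$ with $d=\sum_{j=0}^k m^j$, so that the entries of $\vec{z}$ are precisely the $h^{\vec{\gamma}}$ with $|\vec{\gamma}|\leq k$. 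Summing the identities above over $j=0,\dots,k$ yields $\vec{z}^T\vec{z}=\sum_{j=0}^k(\vec{h}^{\otimes j})^T\vec{h}^{\otimes j}\equiv_\ideal k+1$, so $\mat{M}(1):=\frac{1}{k+1}\Id_d\in\S^d_{++}$ satisfies $\vec{z}^T\mat{M}(1)\vec{z}\equiv_\ideal 1$.

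It then remains to produce $\mat{M}(p)\in\S^d$. The key point is that every $h^{\vec{\beta}}$ with $|\vec{\beta}|\leq 2k$ is a product of two entries of $\vec{z}$: split $\vec{\beta}=\vec{\beta}'+\vec{\beta}''$ with $|\vec{\beta}'|,|\vec{\beta}''|\leq k$ (possible precisely because $|\vec{\beta}|\leq 2k$), so $h^{\vec{\beta}}=h^{\vec{\beta}'}h^{\vec{\beta}''}$ with $h^{\vec{\beta}'}$ an entry of $\vec{h}^{\otimes|\vec{\beta}'|}$ and $h^{\vec{\beta}''}$ an entry of $\vec{h}^{\otimes|\vec{\beta}''|}$, both blocks of $\vec{z}$. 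Applying this to each $\vec{\beta}\in S$ and using the representation of $p$, I obtain indices $a(\vec{\beta}),b(\vec{\beta})$ with $p\equiv_\ideal\sum_{\vec{\beta}\in S}c_{\vec{\beta}}\,z_{a(\vec{\beta})}z_{b(\vec{\beta})}$. Let $\mat{N}\in\R^{d\times d}$ have entry $N_{ab}$ equal to the sum of $c_{\vec{\beta}}$ over all $\vec{\beta}\in S$ with $(a(\vec{\beta}),b(\vec{\beta}))=(a,b)$, and zero otherwise, so that $\vec{z}^T\mat{N}\vec{z}\equiv_\ideal p$; finally set $\mat{M}(p):=\tfrac12(\mat{N}+\mat{N}^T)\in\S^d$. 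Since $\vec{z}^T\mat{N}\vec{z}$ is a scalar it equals $\vec{z}^T\mat{N}^T\vec{z}$, hence $\vec{z}^T\mat{M}(p)\vec{z}=\vec{z}^T\mat{N}\vec{z}\equiv_\ideal p$, completing the construction.

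The one genuine subtlety, and the reason I concatenate all tensor powers $1,\vec{h},\dots,\vec{h}^{\otimes k}$ rather than use $\vec{h}^{\otimes k}$ alone as the discussion preceding the lemma might suggest, is a parity obstruction: the entrywise products of $\vec{h}^{\otimes k}$ span only the monomials $h^{\vec{\beta}}$ with $|\vec{\beta}|=2k$, and multiplying a monomial by powers of $\sum_i h_i^2\equiv_\ideal 1$ preserves the parity of its degree in the $h_i$, so odd-degree terms occurring in a representation of $p$ over $\text{Alg}_\ideal(h_1,\dots,h_m)$ could never be captured. Padding $\vec{z}$ with the lower (in particular odd) tensor powers removes this obstruction while keeping $\mat{M}(1)$ a positive multiple of the identity. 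I expect everything else to be routine bookkeeping, with the padding step being the only place where care is actually required.
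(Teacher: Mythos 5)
Your proof is correct. It differs from the paper's argument only in how the low- and odd-degree products of the $h_i$ are accommodated, and it is worth noting the two mechanisms. The paper observes that $\sum_i h_i^2-1\in\ideal$ iff $\sum_i(h_i/\sqrt{2})^2+(1/\sqrt{2})^2-1\in\ideal$, so one may assume without loss of generality that one of the $\ideal$-spherical polynomials is a positive constant; with a constant in the family the subspaces spanned by $\vec{h}^{\otimes k}+\ideal$ become nested, $p+\ideal$ lies in the span of $\vec{h}^{\otimes 2k}+\ideal$ for some $k$, and one can take $\vec{z}=\vec{h}^{\otimes k}$ with $\mat{M}(1)=\Id$. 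You instead keep the original family and concatenate $\vec{z}=(1,\vec{h},\vec{h}^{\otimes 2},\dots,\vec{h}^{\otimes k})$, using $\sum_{j=0}^k(\sum_i h_i^2)^j\equiv_\ideal k+1$ to get $\mat{M}(1)=\frac{1}{k+1}\Id\succ 0$, and you build $\mat{M}(p)$ explicitly by splitting each exponent $\vec{\beta}$ with $|\vec{\beta}|\leq 2k$ as $\vec{\beta}'+\vec{\beta}''$ with $|\vec{\beta}'|,|\vec{\beta}''|\leq k$ and symmetrizing; both steps are valid, and the "parity obstruction" you flag is exactly the issue the paper's constant-absorption trick is designed to remove (strictly speaking it is only a potential obstruction, since reduction mod $\ideal$ could in principle relate odd- and even-degree products, but your construction does not need to decide this). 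Your route is more explicit and self-contained, spelling out the Gram matrix for $p$ rather than deferring to the discussion preceding the lemma; the paper's route is shorter and keeps $\vec{z}$ as a single tensor power, which matches the grading $U_{k,\ideal}(\vec{h})$ used later in Methods 1 and 2 and in the hierarchy, so it integrates more directly with the rest of the development. Since the lemma is purely existential, either construction suffices.
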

\begin{proof}
    Let $h_1,\dots, h_m\in \R[\vec{x}]$ be $\ideal$-spherical polynomials satisfying $p+\ideal\in\mbox{Alg}_\ideal(h_1, \dots, h_m)$. Note that $\sum_{i=1}^m h_i^2-1\in \ideal$ if and only if $\sum_{i=1}^m (\frac{h_i}{\sqrt{2}})^2+(\frac{1}{\sqrt{2}})^2-1\in \ideal$, so we can assume without loss of generality that one of the $\ideal$-spherical polynomials $h_i$ is a positive constant. Under this premise, the linear subspaces of $\R[\vec{x}]/\ideal$ spanned by $\vec{h}^{\otimes k}+\ideal$, for $k\in \N$, are nested, and $p+\ideal$ is guaranteed to lie in the span of $\vec{h}^{\otimes 2k}+\ideal$ for some $k\in \N$. Statement 1 then follows from our earlier discussion. 
\end{proof}

We conclude by noting that the converse of \cref{lemma: sufficient condition} also holds. Indeed, given a vector $\vec{z}\in \R[\vec{x}]^d$ and Gram matrices $\mat{M}(1)\in\S^d_{++}$ and $\mat{M}(p)\in\S^d$ for the polynomials $1$ and $p$ with respect to $\vec{z}+\ideal$, the entries $\Tilde{z}_1, \dots, \Tilde{z}_d$ of the vector $\tilde{\vec{z}}\coloneq {\mat{M}(1)^{\frac{1}{2}}}\vec{z}$ form a collection of $\ideal$-spherical polynomials such that $p+\ideal\in \text{Alg}_\ideal(\Tilde{z}_1, \dots, \Tilde{z}_d)$. Consequently, the condition that $p$ is equivalent mod $\ideal$ to a polynomial function of some $\ideal$-spherical polynomials $h_1, \dots, h_m\in\R[\vec{x}]$ precisely characterizes the POPs for which we can derive spectral relaxations using the approach outlined in this subsection. In \cref{sec: SR amenable ideals}, we present conditions on an ideal $\ideal$ under which one can derive spectral relaxations for the minimization of any polynomial $p$ over $V_\R(\ideal)$. We also illustrate how these conditions are satisfied by several problem families arising in applications.


\subsection{Constructing Effective Spectral Relaxations}\label{sec: Initial SR}

Consider a polynomial $p\in\R[\vec{x}]$ and an ideal $\ideal\subset\R[\vec{x}]$ such that $p+\ideal$ is in the subalgebra $\mbox{Alg}_{\ideal}(h_1, \dots, h_m)$ of $\R[\vec{x}]/\ideal$ for some $\ideal$-spherical polynomials $h_1, \dots, h_m$.  We build on the preceding subsection to obtain computationally effective spectral relaxations for the POP~\cref{POP} defined by $p$ and $\ideal$.  As the subspace $\mbox{Alg}_{\ideal}(h_1, \dots, h_m)$ of $\R[\vec{x}]/\ideal$ is typically high-dimensional (potentially even infinite-dimensional), a first step is to identify suitable low-dimensional subspaces of $\mbox{Alg}_{\ideal}(h_1, \dots, h_m)$ that enable the computation of tractable-sized Gram matrices for $p$ and $1$.  A natural grading of $\mbox{Alg}_{\ideal}(h_1, \dots, h_m)$ was already suggested in our discussion from \cref{sec: Identifying SRs}. Specifically, for $k\in \N$, we let \begin{equation*}\label{def: U subspace}
    U_{k, \ideal}(\vec{h})\coloneq\mbox{span}\left(\left\{h_{i_1}h_{i_2}\cdots h_{i_k}+\ideal: i_1, \dots, i_k\in [m]\right\}\right)
\end{equation*} be the subspace of $\R[\vec{x}]/\ideal$ spanned by the equivalence classes mod $\ideal$ of all products of exactly $k$ of the $\ideal$-spherical polynomials $h_1, \dots, h_m$, and denote by $d_k$ its dimension. Let $\kappa$ denote the smallest $k\in \N$ such that $p+\ideal$ lies in the subspace $U_{2k,\ideal}(\vec{h})$ (see the proof of \cref{lemma: sufficient condition}).  

For a polynomial $q\in\R[\vec{x}]$ such that $q+\ideal \in U_{2\kappa, \ideal}(\vec{h})$, consider the following Gram matrix for $q$ with respect to the vector of quotient polynomials $\vec{h}^{\otimes \kappa}+\ideal$:
\begin{equation} \label{eq: GM w.r.t. h kron kappa}
    \mat{Y}(q)\coloneq \mbox{argmin}_{\mat{Y}\in \S^{m^\kappa}} \; \norm{\mat{Y}}_F \; \mbox{s.t.}\; q\equiv_\ideal(\vec{h}^{\otimes \kappa})^T\mat{Y}\vec{h}^{\otimes \kappa}. 
\end{equation} 
The constraint above translates into finitely many linear equations on the entries of $\mat{Y}$, thus reducing \cref{eq: GM w.r.t. h kron kappa} to a standard linear least-squares problem. In light of \cref{sec: Identifying SRs}, the definition \cref{eq: GM w.r.t. h kron kappa} already yields a spectral relaxation for \cref{POP}. Indeed, the $m^\kappa$-dimensional identity is a positive definite Gram matrix for the polynomial $1$ with respect to $\vec{h}^{\otimes \kappa}+\ideal$, so the minimum eigenvalue of the matrix $\mat{Y}(p)$ defined by \cref{eq: GM w.r.t. h kron kappa} is a lower bound for $p$ over $V_\R(\ideal)$. However, the size of the vector $\vec{h}^{\otimes \kappa}+\ideal$ can be considerably larger than the dimension $d_\kappa$ of the subspace  $U_{\kappa, \ideal}(\vec{h})$ that it spans, so that the matrix $\mat{Y}(p)$ is unnecessarily large. On a related point, for $\kappa>1$ the redundancy in the entries of $\vec{h}^{\otimes \kappa}$ results in $\mat{Y}(p)$ having $0$ eigenvalues, which restricts the lower bound $\lambda_{\min}(\mat{Y}(p))$ to a non-positive value. Given these considerations, we propose an alternative method, which builds upon \cref{eq: GM w.r.t. h kron kappa} to produce a more effective, $d_\kappa$-dimensional spectral relaxation for the POP~\cref{POP}. \\


\noindent {\bf Method 1:} Fix a basis $\vec{z_\kappa}+\ideal$ for the subspace $U_{\kappa,\ideal}(\vec{h})$ of $\R[\vec{x}]/\ideal$, and consider the unique full column rank matrix $\mat{P}\in \R^{m^{\kappa}\times d_{\kappa}}$ such that $\mat{P}\vec{z_{\kappa}}\equiv_\ideal\vec{h}^{\otimes{\kappa}}$, whose $i$-th row contains the coordinates of the $i$-th entry of $\vec{h}^{\otimes{\kappa}}+\ideal$ in the basis $\vec{z_{\kappa}}+\ideal$. For $q\in\R[\vec{x}]$ such that $q+\ideal \in U_{2\kappa, \ideal}(\vec{h})$, we define: \begin{equation}\label{def: GM method 1}
    \mat{M_\kappa}(q)\coloneq \mat{P}^T\mat{Y}(q)\mat{P}.
\end{equation}
Combined with \cref{eq: GM w.r.t. h kron kappa}, $\mat{M_\kappa}(q)$ defines a Gram matrix for $q$ with respect to the basis $\vec{z_\kappa}+\ideal$. Therefore, if $\mat{M_\kappa}(1)\succ 0$, setting $\mat{M_\kappa}(1)$ and $\mat{M_\kappa}(p)$ as defined by \cref{def: GM method 1} produces a valid spectral relaxation for the POP~\cref{POP}, with $\lambda_{\min}(\mat{M_\kappa}(p),\mat{M_\kappa}(1))\leq \nu$. This constitutes our first proposed method for the construction of a spectral relaxation for~\cref{POP}. Note that the Gram matrix $\mat{M_\kappa}(q)$, defined by \cref{def: GM method 1} for $q+\ideal\in U_{2\kappa,\ideal}(\vec{h})$, is a linear function of $q+\ideal$. This property has several useful consequences, which we detail in the sequel (e.g., \cref{prop: Init translation invariance}). 

\begin{example} \textit{We apply Method 1 to derive a spectral relaxation for minimizing $p(x_1,x_2)=x_1^3x_2-2x_1x_2^3+x_1x_2+x_2^4$ over the unit circle, which corresponds to $V_\R(\ideal)$ for $\ideal=\langle x_1^2+x_2^2-1\rangle$. Observe that the indeterminates $x_1$ and $x_2$ form a set of $\ideal$-spherical polynomials such that $p+\ideal\in U_{4,\ideal}(\vec{x})\leq\R[x_1,x_2]/\ideal$. We thus proceed to find the minimal Frobenius norm Gram matrices $\mat{Y}(1)$ and $\mat{Y}(p)$ for $1$ and $p$, respectively, with respect to $\vec{x}^{\otimes2}+\ideal$. This is achieved by computing the minimum-norm solution to the linear system that equates the coefficients of $(\vec{x}^{\otimes 2})^T \mat{Y} \vec{x}^{\otimes 2}+\ideal$ and $1+\ideal$ (respectively $p+\ideal$) with respect to the set of distinct monomials mod $\ideal$ that appear in the product $\vec{x}^{\otimes 4}$. We obtain: 
\begin{equation*}
   \mat{Y}(1)=\frac{1}{3}{\begin{bmatrix}
        3 & 0 & 0 &1 \\ 0 & 1 & 1 &0 \\ 0 & 1 & 1 & 0 \\ 1 & 0 & 0 & 3
    \end{bmatrix}}, \quad \quad  \mat{Y}(p)=\frac{1}{4}{\begin{bmatrix}
         0 & 2 & 2 &0 \\ 2 & 0 & 0 & -1 \\ 2 & 0 & 0 & -1 \\ 0 & -1 & -1 & 4
    \end{bmatrix}}.
\end{equation*}
Next, we compute the matrix $\mat{P}\in\R^{4\times 3}$ whose $i$-th row contains the coefficients of the $i$-th component of $\vec{x}^{\otimes 2}+\ideal$ with respect to the basis $(1, x_1x_2, x_2^2)+\ideal$ for $U_{2,\ideal}(\vec{x})$. Applying \cref{def: GM method 1}, we obtain the following Gram matrices for the polynomials $1$ and $p$ with respect to $(1, x_1x_2, x_2^2)+\ideal$: \begin{equation*}
    \mat{M_2}(1)=\frac{1}{3}{\begin{bmatrix} 3 & 0 & -2 \\ 0 & 4 & 0 \\ -2 & 0 & 4\end{bmatrix}}, \quad \quad \mat{M_2}(p)=\frac{1}{2}{\begin{bmatrix} 0 & 2 & 0 \\ 2 & 0 & -3 \\ 0 & -3 & 2
\end{bmatrix}}.
\end{equation*}
Note that $\mat{M_2}(1)\succ 0$, so Method 1 yields a valid spectral relaxation for the POP in question and establishes the lower bound $\lambda_{\min}(\mat{M_2}(p),\mat{M_2}(1))\approx -1.014$ on the minimum value of $p$ over the circle, which one can check is approximately $-0.532$.}
\end{example}

In the above example over the unit circle, the matrix $\mat{M_{\kappa}}(1)$ is positive definite. More generally, one can appeal to Proposition 2.2 in~\cite{lovitz2024hierarchyeigencomputationspolynomialoptimization}, along with some additional reasoning, to conclude that Method 1 always yields a positive definite Gram matrix for the polynomial $1$ when applied to the ideal $\ideal=\langle\sum_{i=1}^n x_i^2 -1 \rangle$ defining the unit sphere in $\R^n$ with $\ideal$-spherical polynomials $x_1, \dots, x_n$.  While this property holds for other ideals as well, there exist cases in which Method $1$ fails to return a positive definite Gram matrix for $1$, as illustrated in the following example.

\begin{example}\label{Ex: Method 1 fails} \textit{Consider the ideal $\ideal=\langle x_1^2+x_2^2-1,\;x_1x_2-\frac{1}{2},\; x_2^3+\frac{1}{2}x_1-x_2\rangle$ of $\R[x_1,x_2]$, for which $x_1$ and $x_2$ form a set of $\ideal$-spherical polynomials. Moreover, consider the subspace $U_{1,\ideal}(\vec{x})$ of $\R[\vec{x}]/\ideal$ with basis $\vec{x}+\ideal$. The Gram matrix $\mat{Y}(1)$ for $1$ with respect to $\vec{x}+\ideal$, as defined in \cref{eq: GM w.r.t. h kron kappa}, is given by:
\begin{equation*}
    \mat{Y}(1)=\frac{1}{3}{\begin{bmatrix}
        1 & 2 \\ 2 & 1
    \end{bmatrix}}.
\end{equation*}
The eigenvalues of the matrix $\mat{Y}(1)$ are $-\frac{1}{3}$ and $1$.  Method 1 yields $\mat{M_{1}}(1)=\mat{Y}(1)$ for the choice of basis $\vec{x}+\ideal$ for $U_{1,\ideal}(\vec{x})$, so we have an example for which the method fails to produce a valid spectral relaxation.}
\end{example}

As highlighted by this example, Method $1$ is not always suitable for obtaining a valid spectral relaxation. We now present a modification that addresses the underlying issue. \\


\noindent {\bf Method 2:} Consider the basis $\vec{z_\kappa}+\ideal$ for $U_{\kappa,\ideal}(\vec{h})$, and recall that $\mat{P}$ denotes the unique matrix with full column rank such that $\mat{P}\vec{z_{\kappa}}\equiv_\ideal\vec{h}^{\otimes{\kappa}}$. For a polynomial $q\in \R[\vec{x}]$ such that $q+\ideal\in U_{2\kappa,\ideal}(\vec{h})$, we define:
\begin{equation}\label{def: GM method 2}
    \mat{{M}_\kappa}(q)\coloneq \mat{P}^T\mat{Y}(q-q_0)\mat{P}+q_0\mat{P}^T\mat{P}, 
\end{equation}
where $q_0$ denotes the constant term of the polynomial $q$, and $\mat{Y}(\cdot)$ is defined according to \cref{eq: GM w.r.t. h kron kappa}. One can check that $\mat{{M}_\kappa}(q)$ is again a Gram matrix for $q$ with respect to the basis $\vec{z_\kappa}+\ideal$. Further, we have that $\mat{{M}_\kappa}(1)=\mat{P}^T\mat{P}\succ0$. Consequently, the Gram matrices $\mat{{M}_\kappa}(p)$ and $\mat{{M}_\kappa}(1)$ defined by \cref{def: GM method 2} always lead to a valid spectral relaxation for the POP (\ref{POP}). For instance, applying Method 2 instead of Method 1 in \cref{Ex: Method 1 fails} (with all other choices unchanged) yields $\mat{M_1}(1)=\Id$, resolving the issues previously encountered. Finally, we note that by decoupling the computation of the Gram matrix for the constant term $q_0$ of $q$ from that of the remaining polynomial $q-q_0$, we preserve the linearity of $\mat{{M}_\kappa}(q)$ as a function of $q+\ideal$.   \\

We conclude this Section~by highlighting two features of the spectral bounds produced by the methods outlined above. We begin by showing that the bound $\lambda_{\min}(\mat{M_{\kappa}}(p),\mat{M_{\kappa}}(1))$ is translation invariant as a consequence of linearity.  

\begin{proposition}\label{prop: Init translation invariance} Let $p\in \R[\vec{x}]$ and $\ideal\subset\R[\vec{x}]$ be an ideal such that $p+\ideal$ lies in the subspace $U_{\kappa, \ideal}(\vec{h})$ for some integer $\kappa\in\N$ and some tuple $\vec{h}=(h_1, \dots, h_m)\in \R[\vec{x}]^m$ of $\ideal$-spherical polynomials. Furthermore, let $\mat{M_{\kappa}}(1)\succ 0$ and $\mat{M_{\kappa}}(p)$ be Gram matrices for $1$ and $p$, respectively, generated using either Method 1 or Method 2 for this setup. Then, the value $\lambda_{\min}(\mat{M_{\kappa}}(p),\mat{M_{\kappa}}(1))$ is translation invariant: $$\lambda_{\min}(\mat{M_{\kappa}}(p+q),\mat{M_{\kappa}}(1))=\lambda_{\min}(\mat{M_{\kappa}}(p),\mat{M_{\kappa}}(1))+\eta,$$ for all $\eta\in \R$ and $q\in \R[\vec{x}]$ such that $q\equiv_\ideal \eta$.
\end{proposition}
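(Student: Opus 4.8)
The plan is to reduce the statement to two facts, neither of which needs new machinery: (i) on the subspace $U_{2\kappa,\ideal}(\vec h)$ the Gram-matrix assignment $q+\ideal\mapsto\mat{M_\kappa}(q)$ is a well-defined \emph{linear} map; and (ii) for $\mat{A}$ symmetric and $\mat{B}\succ0$ one has $\lambda_{\min}(\mat{A}+\eta\mat{B},\mat{B})=\lambda_{\min}(\mat{A},\mat{B})+\eta$ for every $\eta\in\R$. Granting these, the identity follows in a line.

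First I would verify that $\mat{M_\kappa}(\cdot)$ is defined on each of $1$, $p$, and $p+q$. Membership in the domain $U_{2\kappa,\ideal}(\vec h)$ is immediate: $1\equiv_\ideal(\sum_{i=1}^m h_i^2)^\kappa\in U_{2\kappa,\ideal}(\vec h)$; the hypothesis places $p+\ideal$ in $U_{\kappa,\ideal}(\vec h)$, hence in $U_{2\kappa,\ideal}(\vec h)$ by the nesting $U_{k,\ideal}(\vec h)\subseteq U_{k+1,\ideal}(\vec h)$ (valid after the harmless normalization making one $h_i$ a positive constant, as in the proof of \cref{lemma: sufficient condition}); and $q\equiv_\ideal\eta$ gives $q+\ideal=\eta\,(1+\ideal)$, whence $(p+q)+\ideal\in U_{2\kappa,\ideal}(\vec h)$ as well. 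Linearity of $\mat{M_\kappa}(\cdot)$ as a function of the class is transparent for Method 1 from \cref{def: GM method 1}: $\mat{M_\kappa}(q)=\mat{P}^{T}\mat{Y}(q)\mat{P}$, and $\mat{Y}(\cdot)$ is the minimum-Frobenius-norm solution of a linear system whose data depend only on $q+\ideal$, hence a linear function of $q+\ideal$. For Method 2 it is exactly the content of the remark after \cref{def: GM method 2}, where the constant term $q_0$ is read off the fixed Gröbner normal form so that the decoupled construction still factors through $U_{2\kappa,\ideal}(\vec h)$.

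With these in hand, linearity gives $\mat{M_\kappa}(p+q)=\mat{M_\kappa}(p)+\mat{M_\kappa}(q)$, and since $q+\ideal=\eta\,(1+\ideal)$ also $\mat{M_\kappa}(q)=\eta\,\mat{M_\kappa}(1)$; hence $\mat{M_\kappa}(p+q)=\mat{M_\kappa}(p)+\eta\,\mat{M_\kappa}(1)$. Because $\mat{M_\kappa}(1)\succ0$ and $\mat{M_\kappa}(p)$ is symmetric, the SDP description \cref{OP: SR} of the minimum generalized eigenvalue applies, and the substitution $\gamma'=\gamma-\eta$ yields
\[
\lambda_{\min}\!\big(\mat{M_\kappa}(p)+\eta\,\mat{M_\kappa}(1),\,\mat{M_\kappa}(1)\big)=\max_{\gamma\in\R}\,\{\gamma:\mat{M_\kappa}(p)-(\gamma-\eta)\mat{M_\kappa}(1)\succeq0\}=\eta+\lambda_{\min}\!\big(\mat{M_\kappa}(p),\mat{M_\kappa}(1)\big),
\]
which is the claim. (One could equally observe that $(\mat{M_\kappa}(p)+\eta\,\mat{M_\kappa}(1))\vec v=\mu\,\mat{M_\kappa}(1)\vec v$ if and only if $\mat{M_\kappa}(p)\vec v=(\mu-\eta)\,\mat{M_\kappa}(1)\vec v$, so the full generalized spectrum shifts by $\eta$.) I expect the only subtle point to be the linearity bookkeeping for Method 2 — namely that ``the constant term of $q$'' must be taken from the canonical representative for $\mat{M_\kappa}(\cdot)$ to be a genuine linear map on $U_{2\kappa,\ideal}(\vec h)$ — and I would invoke the remark following \cref{def: GM method 2} rather than re-derive it.
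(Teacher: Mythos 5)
Your proof is correct and follows essentially the same route as the paper's: both rest on linearity of $\mat{M_\kappa}(\cdot)$ (so that $\mat{M_\kappa}(p+q)=\mat{M_\kappa}(p)+\eta\,\mat{M_\kappa}(1)$) followed by a change of variables in the SDP formulation of the minimum generalized eigenvalue, i.e., the shift property $\lambda_{\min}(\mat{A}+\eta\mat{B},\mat{B})=\lambda_{\min}(\mat{A},\mat{B})+\eta$. Your additional bookkeeping (membership of $1$, $p$, and $p+q$ in $U_{2\kappa,\ideal}(\vec{h})$, and the constant-term subtlety for Method 2) simply makes explicit what the paper's proof assumes via the linearity remark following \cref{def: GM method 2}.
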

\begin{proof}
The result follows from linearity of $\mat{M_\kappa}(\cdot)$. Indeed, we have that 
    \begin{equation*}
        \begin{aligned}
            \lambda_{\min}(\mat{M_{\kappa}}(p+q),\mat{M_{\kappa}}(1))&= \max_{\gamma} \; \gamma \; \;\mbox{s.t.}\; \;\mat{M_\kappa}(p+\eta)-\gamma \mat{M_\kappa}(1)\succeq 0\\
            &= \max_{\gamma'} \;\; \gamma'+\eta \;\; \mbox{s.t.}\; \mat{M_\kappa}(p)-\gamma'\mat{M_\kappa}(1)\succeq 0\\
            &=\lambda_{\min}(\mat{M_{\kappa}}(p),\mat{M_{\kappa}}(1))+\eta,
        \end{aligned}
    \end{equation*}
where the first equality follows from the fact that $\mat{M_{\kappa}}(p+q)=\mat{M_{\kappa}}(p+\eta)$ for any $\eta\in \R$ and $q\in \R[\vec{x}]$ with $q\equiv_\ideal \eta$, and the second is obtained from the change of variables $\gamma'=\gamma+\eta$.
\end{proof}

Next, we show that the bound $\lambda_{\min}(\mat{M_\kappa}(p),(\mat{M_\kappa}(1))$ is independent of the choice of basis $\vec{z_\kappa}+\ideal$ for $U_{\kappa, \ideal}(\vec{h})$.

\begin{proposition}\label{prop: indep of basis kappa} Under the assumptions of \cref{prop: Init translation invariance}, the Gram matrices $\mat{M_{\kappa}}(1)\succ 0$ and $\mat{M_{\kappa}}(p)$ produced using either Method 1 or Method 2, are such that the value $\lambda_{\min}(\mat{M_{\kappa}}(p),\mat{M_{\kappa}}(1))$ is independent from the choice of basis $\vec{z_{\kappa}}+\ideal$ for $U_{\kappa, \ideal}(\vec{h})$. 
\end{proposition}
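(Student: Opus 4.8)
The plan is to show that changing the basis for $U_{\kappa,\ideal}(\vec{h})$ amounts to a congruence transformation applied simultaneously to both Gram matrices $\mat{M_\kappa}(1)$ and $\mat{M_\kappa}(p)$, and that such a transformation leaves the minimum generalized eigenvalue unchanged. First I would set up notation: let $\vec{z_\kappa}+\ideal$ and $\vec{w_\kappa}+\ideal$ be two bases for $U_{\kappa,\ideal}(\vec{h})$, related by an invertible change-of-basis matrix $\mat{S}\in\R^{d_\kappa\times d_\kappa}$ with $\vec{w_\kappa}\equiv_\ideal\mat{S}\vec{z_\kappa}$. The matrix $\mat{P}$ from Method 1/Method 2 satisfies $\mat{P}\vec{z_\kappa}\equiv_\ideal\vec{h}^{\otimes\kappa}$; the corresponding matrix for the basis $\vec{w_\kappa}$ is $\mat{P}'=\mat{P}\mat{S}^{-1}$, since $\mat{P}\mat{S}^{-1}\vec{w_\kappa}\equiv_\ideal\mat{P}\vec{z_\kappa}\equiv_\ideal\vec{h}^{\otimes\kappa}$ and full column rank is preserved.

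The key step is then to track how the Gram matrices transform. Since $\mat{Y}(q)$ in \cref{eq: GM w.r.t. h kron kappa} depends only on $\vec{h}^{\otimes\kappa}$ and $q$ — not on the chosen basis of $U_{\kappa,\ideal}(\vec{h})$ — it is unchanged. Hence for Method 1, $\mat{M_\kappa}'(q)=\mat{P}'^T\mat{Y}(q)\mat{P}'=\mat{S}^{-T}\mat{P}^T\mat{Y}(q)\mat{P}\mat{S}^{-1}=\mat{S}^{-T}\mat{M_\kappa}(q)\mat{S}^{-1}$, and similarly for Method 2, noting that $\mat{P}'^T\mat{P}'=\mat{S}^{-T}\mat{P}^T\mat{P}\mat{S}^{-1}$, so the extra term $q_0\mat{P}^T\mat{P}$ transforms by the same congruence; thus in both cases $\mat{M_\kappa}'(1)=\mat{S}^{-T}\mat{M_\kappa}(1)\mat{S}^{-1}$ and $\mat{M_\kappa}'(p)=\mat{S}^{-T}\mat{M_\kappa}(p)\mat{S}^{-1}$. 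Finally I would invoke the SDP characterization \cref{OP: SR}: the constraint $\mat{M_\kappa}'(p)-\gamma\mat{M_\kappa}'(1)\succeq0$ is equivalent to $\mat{S}^{-T}(\mat{M_\kappa}(p)-\gamma\mat{M_\kappa}(1))\mat{S}^{-1}\succeq0$, which by invertibility of $\mat{S}$ holds if and only if $\mat{M_\kappa}(p)-\gamma\mat{M_\kappa}(1)\succeq0$. The two feasible sets of $\gamma$ coincide, so the optimal values — i.e., the two minimum generalized eigenvalues — are equal.

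I do not anticipate a serious obstacle; the only point requiring a little care is verifying that $\mat{Y}(q)$ genuinely does not depend on the basis choice (it is defined purely in terms of $\vec{h}^{\otimes\kappa}$ and the equivalence relation mod $\ideal$), and that the constant-term correction in Method 2 transforms correctly — both follow from writing out the definitions. One should also recall that $\mat{M_\kappa}(1)\succ0$ is preserved under congruence by an invertible matrix, so the hypothesis of \cref{OP: SR} continues to hold for the new basis, making $\lambda_{\min}$ well-defined and real in both cases.
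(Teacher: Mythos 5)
Your proposal is correct and follows essentially the same route as the paper: a change of basis $\vec{w_\kappa}\equiv_\ideal\mat{S}\vec{z_\kappa}$ replaces $\mat{P}$ by $\mat{P}\mat{S}^{-1}$ (the paper writes this as $\mat{\hat P}=\mat{P}\mat{G}$ with $\mat{G}=\mat{S}^{-1}$), so both Gram matrices from Method 1 or 2 undergo the same congruence, which leaves $\lambda_{\min}(\mat{M_\kappa}(p),\mat{M_\kappa}(1))$ unchanged. Your write-up just makes explicit the two points the paper leaves to the reader (basis-independence of $\mat{Y}(q)$ and the transformation of the Method 2 correction term), both of which you handle correctly.
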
 
\begin{proof} Let $\mat{M_{\kappa}}(\cdot)$ and $\mat{\hat{M}_{\kappa}}(\cdot)$ denote Gram matrices obtained from Methods 1 or 2 for two choices of basis $\vec{z_{\kappa}}+\ideal$ and $\vec{\hat{z}_{\kappa}}+\ideal$ for $U_{\kappa, \ideal}(\vec{h})$, respectively. Given the matrices $\mat{P}$ and $\mat{\hat{P}}$ such that $\mat{P}\vec{z_{\kappa}}\equiv_\ideal\mat{\hat{P}}\vec{\hat{z}_{\kappa}}\equiv_\ideal \vec{h}^{\otimes \kappa}$, one can check that $\mat{\hat{P}}=\mat{P}\mat{G}$, where $\mat{G}\in \R^{d_\kappa\times d_\kappa}$ denotes the unique invertible matrix satisfying $\vec{z_{\kappa}}\equiv_\ideal\mat{G}\vec{\hat{z}_{\kappa}}$. It follows that $\mat{\hat{M}_{\kappa}}(\cdot)=\mat{G}^T\mat{M_{\kappa}}(\cdot)\mat{G}$, which implies $\lambda_{\min}(\mat{\hat{M}_{\kappa}}(p),\mat{\hat{M}_{\kappa}}(1))=\lambda_{\min}(\mat{M_{\kappa}}(p),\mat{M_{\kappa}}(1))$.
\end{proof}

\subsection{A Hierarchy of Spectral Relaxations} \label{sec: Hierarchy}

In \cref{sec: Initial SR}, we describe how to derive a spectral relaxation for the POP~\cref{POP} associated to the polynomial $p$ and the ideal $\ideal$, given access to $\ideal$-spherical polynomials $h_1, \dots, h_m$ such that $p+\ideal\in\mbox{Alg}_{\ideal}(h_1, \dots, h_m)$.  Here we present a method to tighten a given spectral relaxation for \cref{POP} to obtain a spectral lower bound on the optimal value $\nu$ that is at least as good as the one at hand. By applying this procedure iteratively, one obtains a hierarchy of spectral relaxations for \cref{POP} that yields a non-decreasing sequence of lower bounds on its optimal value $\nu$. 

Fix $k\in\N$ such that $p+\ideal\in U_{2k,\ideal}(\vec{h})$, and suppose that we have access to Gram matrices $\mat{M_k}(1)$ and $\mat{M_k}(p)$ for the polynomials $1$ and $p$, respectively, relative to a fixed basis $\vec{z_k}+\ideal$ for the subspace $U_{k,\ideal}(\vec{h})$ of $\R[\vec{x}]/\ideal$. Because $U_{k+1,\ideal}(\vec{h})=\mbox{span}(\vec{h}\otimes\vec{z_k}+\ideal)$, and given a basis $\vec{z_{k+1}}+\ideal$ for $U_{k+1,\ideal}(\vec{h})$, there exists a unique matrix $\mat{L_{k+1}}\in\R^{md_k\times d_{k+1}}$ with full column rank such that $\mat{L_{k+1}}\vec{z_{k+1}}\equiv_\ideal\vec{h}\otimes\vec{z_k}$. We then have that \begin{equation*}\label{eq: step GM}
    p\equiv_\ideal \left(\vec{h}^T\vec{h}\right)\left(\vec{z_k}^T\mat{M_k}(p)\vec{z_k}\right) \equiv_\ideal \vec{z_{k+1}}^T\mat{L_{k+1}}^T\left(\Id_m\otimes\mat{M_k}(p)\right)\mat{L_{k+1}}\vec{z_{k+1}},
\end{equation*} 
where the first equivalence follows from the assumptions that $\mat{M_k}(p)$ is a Gram matrix for $p$ with respect to $\vec{z_k}+\ideal$ and that the polynomials $h_1,\dots, h_m$ are $\ideal$-spherical, and the second follows from the definition of $\mat{L_{k+1}}$ together with the properties of the Kronecker product. Hence, we have the following Gram matrix for the polynomial $p$ with respect to the basis $\vec{z_{k+1}}+\ideal$:  $$\mat{M_{k+1}}(p)\coloneqq\mat{L_{k+1}}^T\left(\Id_m\otimes\mat{M_k}(p)\right)\mat{L_{k+1}}.$$ Similarly, we have that $\mat{M_{k+1}}(1)\coloneqq \mat{L_{k+1}}^T\left(\Id_m\otimes\mat{M_k}(1)\right)\mat{L_{k+1}}$ is a Gram matrix for the polynomial $1$ with respect to $\vec{z_{k+1}}+\ideal$. 

Based on the reasoning above, we propose the following algorithm for the construction of a hierarchy of spectral relaxations for the POP~\cref{POP}. As in \cref{sec: Initial SR}, we let $\kappa$ denote the smallest $k\in\N$ such that $p+\ideal\in U_{2k,\ideal}(\vec{h})$.

\begin{algorithm}[H]
\caption{Iterative Construction of Spectral Relaxation Hierarchy}\label{alg: Iterative Construction}
\begin{algorithmic}
\State Initialize $k=\kappa$ 
\begin{enumerate}
    \item Compute $\mat{M_{\kappa}}(1)\succ 0$ and $\mat{M_{\kappa}}(p)$ as described in \cref{sec: Initial SR}. 
    \item Fix a basis $\vec{z_{k+1}}+\ideal$ for $U_{k+1, \ideal}(\vec{h})$.
    \item Compute $\mat{L_{k+1}}$ s.t. $\mat{L_{k+1}}\vec{z_{k+1}}\equiv_\ideal \vec{h}\otimes\vec{z_k}$. 
    \item Update $\mat{M_{k+1}}(1)=\mat{L_{k+1}}^T\left(\Id_m\otimes \mat{M_{k}}(1)\right)\mat{L_{k+1}}$
    \item Update $\mat{M_{k+1}}(p)=\mat{L_{k+1}}^T\left(\Id_m\otimes \mat{M_{k}}(p)\right)\mat{L_{k+1}}$
    \item Set $k\leftarrow k+1$ and return to step 2. 
\end{enumerate}
\end{algorithmic}
\label{iterative construction}
\end{algorithm}

\noindent We have the following result. 

\begin{theorem} \label{theo: Main Theorem} 

Let $p\in\R[\vec{x}]$ and $\ideal\subset\R[\vec{x}]$ be an ideal, and let $\kappa\in\N$ denote the smallest integer $k\in\N$ for which $p+\ideal\in U_{2k,\ideal}(\vec{h})$. Then the matrices $\mat{M_k}(1)$ and $\mat{M_k}(p)$, for $k\geq \kappa$, generated by \cref{iterative construction} define the following hierarchy of spectral relaxations for the POP (\ref{POP}): $$\lambda_{\min}(\mat{M_k}(p),\mat{M_k}(1))=\max_{\gamma}  \gamma \; \mbox{s.t.} \; \mat{M_k}(p)-\gamma \mat{M_k}(1)\succeq 0.$$ In particular, we have that $\{\lambda_{\min}(\mat{M_k}(p),\mat{M_k}(1))\}_{k\geq\kappa}$ constitutes a non-decreasing sequence of spectral lower bounds for the minimum value of $p$ over $V_{\R}(\ideal)$. \end{theorem}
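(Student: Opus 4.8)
The plan is to verify two things: (i) for each $k \geq \kappa$, the pair $(\mat{M_k}(p), \mat{M_k}(1))$ yields a valid spectral relaxation, i.e.\ $\mat{M_k}(1) \succ 0$ and $\lambda_{\min}(\mat{M_k}(p), \mat{M_k}(1)) \leq \nu$; and (ii) the resulting sequence of bounds is non-decreasing. Both parts proceed by induction on $k$, with the base case $k = \kappa$ already handled in \cref{sec: Initial SR} (Method 1 or Method 2 produces $\mat{M_\kappa}(1) \succ 0$, and the discussion in \cref{sec: Identifying SRs} gives $\lambda_{\min}(\mat{M_\kappa}(p), \mat{M_\kappa}(1)) \leq \nu$).

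For the inductive step of (i), I would first confirm that $\mat{M_{k+1}}(1)$ and $\mat{M_{k+1}}(p)$ as defined by \cref{iterative construction} are genuine Gram matrices for $1$ and $p$ with respect to $\vec{z_{k+1}} + \ideal$ — this is exactly the chain of equivalences $p \equiv_\ideal (\vec{h}^T\vec{h})(\vec{z_k}^T \mat{M_k}(p)\vec{z_k}) \equiv_\ideal \vec{z_{k+1}}^T \mat{M_{k+1}}(p) \vec{z_{k+1}}$ displayed just before the algorithm, using $\ideal$-sphericity of $\vec{h}$ and the defining property of $\mat{L_{k+1}}$. Positive definiteness of $\mat{M_{k+1}}(1)$ follows because $\Id_m \otimes \mat{M_k}(1) \succ 0$ by the inductive hypothesis and $\mat{L_{k+1}}$ has full column rank, so $\mat{L_{k+1}}^T(\Id_m \otimes \mat{M_k}(1))\mat{L_{k+1}} \succ 0$. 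Given that $\mat{M_{k+1}}(1) \succ 0$ and that $\mat{M_{k+1}}(p) - \gamma \mat{M_{k+1}}(1)$ is a Gram matrix for $p - \gamma$, any $\gamma$ feasible for the spectral relaxation certifies $p - \gamma \equiv_\ideal$ an SOS polynomial, hence $\gamma \leq \nu$, giving $\lambda_{\min}(\mat{M_{k+1}}(p), \mat{M_{k+1}}(1)) \leq \nu$ as in \cref{sec: Identifying SRs}.

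For the monotonicity claim (ii), fix $k \geq \kappa$ and let $\gamma^\star = \lambda_{\min}(\mat{M_k}(p), \mat{M_k}(1))$, so $\mat{M_k}(p) - \gamma^\star \mat{M_k}(1) \succeq 0$. Then $\Id_m \otimes (\mat{M_k}(p) - \gamma^\star \mat{M_k}(1)) \succeq 0$, and conjugating by $\mat{L_{k+1}}$ preserves positive semidefiniteness, so
\[
\mat{L_{k+1}}^T\bigl(\Id_m \otimes \mat{M_k}(p)\bigr)\mat{L_{k+1}} - \gamma^\star \, \mat{L_{k+1}}^T\bigl(\Id_m \otimes \mat{M_k}(1)\bigr)\mat{L_{k+1}} = \mat{M_{k+1}}(p) - \gamma^\star \mat{M_{k+1}}(1) \succeq 0,
\]
which shows $\gamma^\star$ is feasible for the level-$(k+1)$ relaxation and hence $\lambda_{\min}(\mat{M_{k+1}}(p), \mat{M_{k+1}}(1)) \geq \gamma^\star$. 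Iterating gives the non-decreasing sequence.

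I do not expect a serious obstacle here; the argument is essentially bookkeeping around the Kronecker identity $\vec{h}^T\vec{h} \equiv_\ideal 1$ and the fact that congruence by a full-column-rank matrix preserves both positive definiteness and positive semidefiniteness. The one point requiring a little care is making sure the equivalences defining $\mat{M_{k+1}}(p)$ are stated at the level of the quotient ring (so that the Kronecker product identities are applied to genuine representatives), and confirming that $\mat{L_{k+1}}$ indeed has full column rank — which holds because $\vec{z_{k+1}} + \ideal$ is a basis of $U_{k+1,\ideal}(\vec{h}) = \mathrm{span}(\vec{h} \otimes \vec{z_k} + \ideal)$, so the columns of $\mat{L_{k+1}}$ express linearly independent elements and are therefore independent.
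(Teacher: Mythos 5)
Your proposal is correct and follows essentially the same route as the paper's proof: the matrices are Gram matrices by construction, full column rank of $\mat{L_{k+1}}$ preserves positive definiteness of $\mat{M_k}(1)$ (hence validity of each relaxation), and monotonicity follows from the identity $\mat{M_{k+1}}(p)-\gamma\mat{M_{k+1}}(1)=\mat{L_{k+1}}^T\bigl(\Id_m\otimes(\mat{M_k}(p)-\gamma\mat{M_k}(1))\bigr)\mat{L_{k+1}}$ together with preservation of positive semidefiniteness. Your explicit inductive bookkeeping and the remark on why $\mat{L_{k+1}}$ has full column rank only spell out what the paper leaves implicit.
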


\begin{proof}
By construction, the matrices $\mat{M_k}(1)$ and $\mat{M_k}(p)$ generated with \cref{alg: Iterative Construction} are Gram matrices for the polynomials $1$ and $p$, respectively, with respect to some basis for the subspace $U_{k,\ideal}(\vec{h})$. Moreover, since the matrices $\mat{L_{k+1}}$ have full column rank, the positive definiteness of $\mat{M_k}(1)$ is preserved at each iteration, ensuring that the resulting matrices define a sequence of valid spectral relaxations for~\cref{POP}. Finally, for $\gamma \in \R$, the matrix $$\mat{M_{k+1}}(p)-\gamma\mat{M_{k+1}}(1) = \mat{L_{k+1}}^T(\mat{I}_m\otimes(\mat{M_k}(p)-\gamma\mat{M_k}(1)))\mat{L_{k+1}}$$ is positive semidefinite whenever $\mat{M_k}(p)-\gamma\mat{M_k}(1)\succeq 0$, as the matrices $\mat{A}$ and $\Id\otimes \mat{A}$ have the same eigenvalues. Thus, $\lambda_{\min}(\mat{M_{k}}(p),\mat{M_{k}}(1))\leq \lambda_{\min}(\mat{M_{k+1}}(p),\mat{M_{k+1}}(1)).$
\end{proof}


\cref{alg: Iterative Construction} provides an iterative procedure for deriving a spectral relaxation hierarchy for the POP~\cref{POP}. Notably, the same hierarchy can also be obtained via a non-iterative construction. Specifically, the matrices $\mat{M_k}(1)$ and $\mat{M_k}(p)$ admit the following expressions  in terms of the initial Gram matrices $\mat{M_\kappa}(1)$ and $\mat{M_\kappa}(p)$:
\begin{equation} \label{eq: noniterative construction}
    \begin{aligned}
        &\mat{M_{k}}(p)=\mat{T_{k}}^T\left(\Id_m^{\otimes k-\kappa}\otimes \mat{M_{\kappa}}(p)\right)\mat{T_{k}} \;\; \\
        & \mat{M_{k}}(1)=\mat{T_{k}}^T\left(\Id_m^{\otimes k-\kappa}\otimes \mat{M_{\kappa}}(1)\right)\mat{T_{k}}\quad\quad
    \end{aligned}.
\end{equation}
Here, $\mat{T_k}$ is the matrix satisfying the equivalence $\mat{T_{k}}\vec{z_{k}}\equiv_{\ideal} \vec{h}^{\otimes (k-\kappa)}\otimes \vec{z_{\kappa}}$, which  exists and is unique because $\vec{z_{k}}+\ideal$ is a basis for the span of $\vec{h}^{\otimes (k-\kappa)}\otimes \vec{z_{\kappa}}+\ideal$. The expressions in \cref{eq: noniterative construction} may be verified by recursively solving for $\mat{M_k}(1)$ and $\mat{M_k}(p)$ using \cref{iterative construction}. 


The non-iterative construction \cref{eq: noniterative construction} highlights some notable properties of our framework. One such feature is the independence of the lower bounds produced by our hierarchy from the choice of basis $\vec{z_k}+\ideal$ for $U_{k,\ideal}(\vec{h})$, for all $k\geq\kappa$. In other words, we have that the quality of the bounds produced by \cref{alg: Iterative Construction} relies only on the choice of $\ideal$-spherical polynomials $h_1, \dots, h_m$. 

\begin{corollary} Under the assumptions of \cref{theo: Main Theorem}, the lower bounds $\{\lambda_{\min}(\mat{M_k}(p),\mat{M_k}(1))\}_{k\geq\kappa}$ produced by \cref{alg: Iterative Construction} are independent of the choice of basis $\vec{z_k}+\ideal$ for $U_{k,\ideal}(\vec{h})$ for all $k\geq\kappa$. 
\label{cor: indep of basis k>=kappa}
\end{corollary}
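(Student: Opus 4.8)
The plan is to leverage the non-iterative formula \cref{eq: noniterative construction} together with the two basis-independence facts already established for the initial relaxation, namely \cref{prop: indep of basis kappa}. I will show that changing the basis $\vec{z_k}+\ideal$ of $U_{k,\ideal}(\vec{h})$ (for any fixed $k\geq\kappa$) transforms the pair $(\mat{M_k}(p),\mat{M_k}(1))$ by a congruence with an invertible matrix, which leaves $\lambda_{\min}(\mat{M_k}(p),\mat{M_k}(1))$ unchanged via the SDP characterization. Since \cref{eq: noniterative construction} shows that $\mat{M_k}(\cdot)$ depends on the chosen bases only through $\mat{M_\kappa}(\cdot)$ and the matrix $\mat{T_k}$, I need to track how each of these two ingredients responds to a change of basis.

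First I would fix $k\geq\kappa$ and let $\vec{z_k}+\ideal$ and $\vec{\hat z_k}+\ideal$ be two bases of $U_{k,\ideal}(\vec{h})$, related by the unique invertible $\mat{G_k}\in\R^{d_k\times d_k}$ with $\vec{z_k}\equiv_\ideal \mat{G_k}\vec{\hat z_k}$. From $\mat{T_k}\vec{z_k}\equiv_\ideal \vec{h}^{\otimes(k-\kappa)}\otimes\vec{z_\kappa}$ and the analogous relation for $\vec{\hat z_k}$, together with uniqueness, I get $\mat{\hat T_k}=\mat{T_k}\mat{G_k}$ — here I am holding the basis $\vec{z_\kappa}+\ideal$ of $U_{\kappa,\ideal}(\vec{h})$ fixed. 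Substituting into \cref{eq: noniterative construction} gives $\mat{\hat M_k}(\cdot)=\mat{G_k}^T\mat{M_k}(\cdot)\mat{G_k}$ for both arguments $1$ and $p$ simultaneously. Then for any $\gamma\in\R$, $\mat{\hat M_k}(p)-\gamma\mat{\hat M_k}(1)=\mat{G_k}^T(\mat{M_k}(p)-\gamma\mat{M_k}(1))\mat{G_k}$, which is $\succeq 0$ iff $\mat{M_k}(p)-\gamma\mat{M_k}(1)\succeq 0$ because $\mat{G_k}$ is invertible; hence the feasible sets of the two instances of \cref{OP: SR for POP} coincide and the optimal values agree.

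It remains to handle the dependence on the basis $\vec{z_\kappa}+\ideal$ of $U_{\kappa,\ideal}(\vec{h})$ itself, since \cref{eq: noniterative construction} also feeds this choice in through $\mat{M_\kappa}(\cdot)$ and through $\mat{T_k}$. This is exactly where \cref{prop: indep of basis kappa} enters: its proof shows that under a change of basis at level $\kappa$ given by invertible $\mat{G}$ with $\vec{z_\kappa}\equiv_\ideal\mat{G}\vec{\hat z_\kappa}$, one has $\mat{\hat M_\kappa}(\cdot)=\mat{G}^T\mat{M_\kappa}(\cdot)\mat{G}$. I would combine this with the transformation of $\mat{T_k}$ under the same change (from $\mat{T_k}\vec{z_k}\equiv_\ideal\vec{h}^{\otimes(k-\kappa)}\otimes\vec{z_\kappa}\equiv_\ideal(\Id_m^{\otimes(k-\kappa)}\otimes\mat{G})(\vec{h}^{\otimes(k-\kappa)}\otimes\vec{\hat z_\kappa})$, one sees the relevant factor of $\Id_m^{\otimes(k-\kappa)}\otimes\mat{G}$ propagates), and observe that in \cref{eq: noniterative construction} it cancels: $(\Id_m^{\otimes(k-\kappa)}\otimes\mat{G})^T(\Id_m^{\otimes(k-\kappa)}\otimes\mat{M_\kappa}(\cdot))(\Id_m^{\otimes(k-\kappa)}\otimes\mat{G})=\Id_m^{\otimes(k-\kappa)}\otimes(\mat{G}^T\mat{M_\kappa}(\cdot)\mat{G})=\Id_m^{\otimes(k-\kappa)}\otimes\mat{\hat M_\kappa}(\cdot)$, so again the net effect on $\mat{M_k}(\cdot)$ is a congruence by an invertible matrix and the eigenvalue is unchanged. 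Combining both reductions, the bound depends only on $h_1,\dots,h_m$.

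The main obstacle I anticipate is purely bookkeeping: carefully verifying that the various defining relations (for $\mat{G_k}$, for $\mat{T_k}$, and for the level-$\kappa$ change of basis) interact correctly with the Kronecker products, and in particular confirming that $\mat{\hat T_k}=\mat{T_k}\mat{G_k}$ and the level-$\kappa$ propagation formula hold by the uniqueness clauses attached to each matrix. There is no conceptual difficulty once \cref{eq: noniterative construction} and \cref{prop: indep of basis kappa} are in hand — the entire argument reduces to the observation that congruence by an invertible matrix preserves the feasible region of \cref{OP: SR for POP}.
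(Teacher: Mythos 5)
Your proposal is correct and follows essentially the same route as the paper: basis changes at level $k$ are handled by a congruence argument analogous to \cref{prop: indep of basis kappa} via $\mat{\hat T_k}=\mat{T_k}\mat{G_k}$, and basis changes at level $\kappa$ are absorbed through \cref{eq: noniterative construction} using $\mat{\hat M_\kappa}(\cdot)=\mat{G}^T\mat{M_\kappa}(\cdot)\mat{G}$ and $\mat{\hat T_k}=(\Id\otimes\mat{G}^{-1})\mat{T_k}$, exactly as in the paper's proof. The only cosmetic difference is that your cancellation in fact yields $\mat{\hat M_k}(\cdot)=\mat{M_k}(\cdot)$ exactly (as the paper notes), not merely a congruent pair, which only strengthens your conclusion.
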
 
\begin{proof} 

Independence of the value $\lambda_{\min}(\mat{M_k}(p),\mat{M_k}(1))$ from the choice of basis for the subspace $U_{k,\ideal}(\vec{h})$ was shown for $k=\kappa$ in \cref{prop: indep of basis kappa} and extends to $k>\kappa$ via \cref{eq: noniterative construction} and an analogous argument. 
It remains to show that $\lambda_{\min}(\mat{M_k}(p),\mat{M_k}(1))$, for $k>\kappa$, is independent from the choice of basis for the initial subspace $U_{\kappa,\ideal}(\vec{h})$. Let $\mat{M_k}(\cdot)$ and $\mat{\overline{M}_k}(\cdot)$ denote the Gram matrices produced by \cref{alg: Iterative Construction} given a fixed choice of basis $\vec{z_k}+\ideal$ for $U_{k,\ideal}(\vec{h})$ and the choices of basis $\vec{z_{\kappa}}+\ideal$ and $\vec{\overline{z}_{\kappa}}+\ideal$ for $U_{\kappa,\ideal}(\vec{h})$, respectively. One can check that $\mat{\overline{M}_{\kappa}}(\cdot)=\mat{R}^T\mat{M_{\kappa}}(\cdot)\mat{R}$ and $\mat{\overline{T}_{k}}=(\Id\otimes \mat{R}^{-1})\mat{T_k}$, where $\mat{T_{k}}\vec{z_{k}}\equiv_{\ideal} \vec{h}^{\otimes (k-\kappa)}\otimes \vec{z_{\kappa}}$, $\mat{\overline{T}_{k}}\vec{z_{k}}\equiv_{\ideal} \vec{h}^{\otimes (k-\kappa)}\otimes \vec{\overline{z}_{\kappa}}$, and $\mat{R}\in\R^{d_{\kappa}\times d_{\kappa}}$ denotes the unique invertible matrix satisfying $\vec{z_{\kappa}}\equiv_\ideal\mat{R}~\vec{\overline{z}_{\kappa}}$. Substituting into the expression for $\mat{\overline{M}_{k}}(\cdot)$ given by \cref{eq: noniterative construction} yields $\mat{\overline{M}_{k}}(\cdot)=\mat{M_k}(\cdot)$. The result follows. 
\end{proof}

In \cref{sec: Initial SR}, we showed that the initial lower bound $\lambda_{\min}(\mat{M_{\kappa}}(p),\mat{M_\kappa}(1))$ produced by \cref{alg: Iterative Construction} is translation invariant. This property is inherited by the sequence $\{\lambda_{\min}(\mat{M_k}(p),\mat{M_k}(1))\}_{k\geq \kappa}$ due to the derivation of the spectral relaxations in the hierarchy from the initial relaxation via linear maps. 

\begin{corollary} Under the assumptions of \cref{theo: Main Theorem}, the sequence $\{\lambda_{\min}(\mat{M_{k}}(p),\mat{M_k}(1))\}_{k\geq \kappa}$ produced by \cref{alg: Iterative Construction} is translation invariant: $$\lambda_{\min}(\mat{M_{k}}(p+q),\mat{M_k}(1))=\lambda_{\min}(\mat{M_{k}}(p),\mat{M_k}(1))+\eta,$$ for any $\eta\in \R$ and $q\in \R[\vec{x}]$ such that $q\equiv_\ideal \eta$, and for all $ k\geq \kappa$.  
\end{corollary}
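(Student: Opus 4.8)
The plan is to deduce this corollary directly from the translation invariance already established at level $\kappa$ (\cref{prop: Init translation invariance}) together with the non-iterative formula \cref{eq: noniterative construction}, which expresses every higher-level Gram matrix as a fixed congruence applied to $\Id_m^{\otimes k-\kappa}\otimes\mat{M_{\kappa}}(\cdot)$. The key observation is that $\mat{M_{\kappa}}(\cdot)$ is a linear function of its polynomial argument (a point emphasized in \cref{sec: Initial SR} for both Method 1 and Method 2), and hence so is $\mat{M_k}(\cdot)$, since \cref{eq: noniterative construction} only post-composes with the linear maps $\mat{A}\mapsto\mat{T_k}^T(\Id_m^{\otimes k-\kappa}\otimes\mat{A})\mat{T_k}$. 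I would first record this linearity explicitly, so that $\mat{M_k}(p+q)=\mat{M_k}(p+\eta)=\mat{M_k}(p)+\eta\,\mat{M_k}(1)$ whenever $q\equiv_\ideal\eta\in\R$, using the fact that $\mat{M_k}(1)$ is the Gram matrix of the constant polynomial $1$ and that $\mat{M_k}(\cdot)$ depends only on the equivalence class mod $\ideal$ (as all the defining equivalences $\equiv_\ideal$ in \cref{alg: Iterative Construction} are taken modulo $\ideal$).

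Granting that identity, the argument is the same change-of-variables computation as in the proof of \cref{prop: Init translation invariance}: substituting $\mat{M_k}(p+q)=\mat{M_k}(p)+\eta\,\mat{M_k}(1)$ into the SDP characterization from \cref{theo: Main Theorem} gives
\begin{equation*}
\lambda_{\min}(\mat{M_k}(p+q),\mat{M_k}(1))=\max_{\gamma}\;\gamma\;\;\mathrm{s.t.}\;\;\mat{M_k}(p)-(\gamma-\eta)\mat{M_k}(1)\succeq 0,
\end{equation*}
and the substitution $\gamma'=\gamma-\eta$ turns this into $\lambda_{\min}(\mat{M_k}(p),\mat{M_k}(1))+\eta$. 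This is valid because $\mat{M_k}(1)\succ 0$ (guaranteed by \cref{theo: Main Theorem}, as full column rank of the $\mat{L}$ or $\mat{T}$ matrices preserves positive definiteness), so $\lambda_{\min}$ is indeed attained and finite, and the change of variables is an affine bijection of $\R$.

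One mild subtlety to address is that the corollary's hypothesis only requires $p+\ideal\in U_{2\kappa,\ideal}(\vec{h})$; for the statement $\mat{M_k}(p+q)$ to be defined we need $(p+q)+\ideal\in U_{2k,\ideal}(\vec{h})$ as well, which holds because $q\equiv_\ideal\eta$ means $(p+q)+\ideal=(p+\eta)+\ideal$, and the constant $\eta$ lies in $U_{2k,\ideal}(\vec{h})$ since one of the $\ideal$-spherical polynomials may be taken to be a nonzero constant (as in the proof of \cref{lemma: sufficient condition}), so the constant function $1+\ideal$ — and hence $\eta+\ideal$ — belongs to $U_{2k,\ideal}(\vec{h})$; the subspaces are nested, so membership at level $2\kappa$ gives membership at every level $2k\geq 2\kappa$. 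I expect the main (and only real) obstacle to be making the linearity claim $\mat{M_k}(p+\eta)=\mat{M_k}(p)+\eta\mat{M_k}(1)$ fully rigorous, i.e., carefully checking that \cref{eq: noniterative construction} together with the Method 1/Method 2 definitions really does yield a map that is additive and respects scaling of the constant term — but this is precisely the linearity remarked upon when the two methods were introduced, so it can be invoked rather than re-derived.
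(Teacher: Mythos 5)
Your proposal is correct and follows essentially the same route as the paper: the paper's proof is a one-line appeal to \cref{prop: Init translation invariance} and \cref{eq: noniterative construction}, with the surrounding text noting that translation invariance is inherited because the higher-level relaxations arise from the initial one via linear maps, which is exactly the linearity-plus-change-of-variables argument you spell out. Your additional check that $(p+q)+\ideal$ remains in the relevant subspaces is a harmless elaboration of points the paper treats implicitly.
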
 
\begin{proof}
    The result follows from \cref{prop: Init translation invariance} and equation \cref{eq: noniterative construction}. 
\end{proof}

\paragraph{Specialization to the case of homogenous minimization over the sphere.} We conclude this subsection by observing that our hierarchy of spectral relaxations produced by \cref{alg: Iterative Construction} (initialized with Method 1 in \cref{sec: Initial SR}) specializes to that of~\cite{lovitz2024hierarchyeigencomputationspolynomialoptimization} for homogeneous optimization over the sphere.  For a homogeneous polynomial $p\in \R[\vec{x}]$ of degree $2D$ and for $k\geq D$, the spectral bound proposed by the authors of~\cite{lovitz2024hierarchyeigencomputationspolynomialoptimization} for the minimization of $p$ over the Euclidean sphere $\mathcal{S}^{n-1}$ is given by $\lambda_{\min}(\mat{\tilde{M}_k}(p),\mat{\tilde{M}_k}(s))$, where $s(\vec{x})\coloneqq\left(\sum_{i=1}^n x_i^2\right)^D$ and the matrix $\mat{\tilde{M}_k}(q)$ for a homogeneous polynomial $q$ of degree 2D is defined as
\begin{equation*}\label{eq: Lovitz et al hierarchy}
    \mat{\tilde{M}_k}(q)\coloneqq\Pi_{n,k}\left(\Id_n^{\otimes k-D}\otimes\mat{{Y}}(q)\right)\Pi_{n,k}^T.
\end{equation*}
Here $\mat{{Y}}(q)$ denotes the matrix $\mat{{Y}}$ of minimal Frobenius norm satisfying the identity $q=\vec{x}^{\otimes D}\mat{{Y}}\vec{x}^{\otimes D}$, and $\Pi_{n,k}:(\R^n)^{\otimes{k}}\to \mbox{Sym}^k(\R^n)$ is the orthogonal projection onto the subspace of symmetric $k$-tensors.  To make the connection with our work, consider the ideal $\ideal=\langle \sum_{i=1}^n x_i^2 -1 \rangle$ defining the unit sphere $\mathcal{S}^{n-1}$, along with the natural choice of $\ideal$-spherical polynomials $x_1, \dots, x_n$. For a homogeneous polynomial $p\in \R[\vec{x}]$ of degree $2D$, observe that $p+\ideal\in U_{2k, \ideal}(\vec{x})$ for all $k\geq D$, and \cref{alg: Iterative Construction} can be applied with $\kappa=D$ to produce a sequence of spectral lower bounds for $p$ over $\mathcal{S}^{n-1}$. The matrices $\mat{\Tilde{M}_D}(s)$ and $\mat{\Tilde{M}_D}(p)$ coincide with the Gram matrices $\mat{M_D}(1)$ and $\mat{M_D}(p)$ for $1$ and $p$, respectively, that are produced by Method 1 with respect to the basis $\Pi_{n,D}\vec{x}^{\otimes D}+\ideal$ for $U_{D,\ideal}(\vec{x})$.  For $k>D$, one can verify that $\Pi_{n,k}=\mat{T_k}^T(\Id_{n}^{\otimes k-D} \otimes \mat{P}^T)$, where $\mat{P}$ and $\mat{T_k}$ are uniquely defined by the relations $\mat{P}\vec{z_D}\equiv_\ideal\vec{x}^{\otimes D}$ and $\mat{T_k}(\Pi_{n,k}\vec{x}^{\otimes k})\equiv_\ideal\vec{x}^{\otimes k-D}\otimes \vec{z_D}$ for $\vec{z_D}\coloneq \left( x_1^{\alpha_1} x_2^{\alpha_2} \cdots x_n^{\alpha_n} \right)_{\substack{\alpha \in \mathbb{N}^n, |\alpha| = D}}$. By \cref{def: GM method 1} and \cref{eq: noniterative construction}, the matrices $\mat{M_k}(1)$ and $\mat{M_k}(p)$ produced by \cref{alg: Iterative Construction} initialized with Method 1 for the choice of bases $\vec{z_D}+\ideal$ and $\Pi_{n,k}\vec{x}^{\otimes k}+\ideal$ for $U_{D,\ideal}(\vec{x})$ and $U_{k,\ideal}(\vec{x})$, respectively, coincide with $\mat{\Tilde{M}_k}(s)$ and $\mat{\Tilde{M}_k}(p)$. Because the bounds produced by our hierarchy of spectral relaxations are independent from this choice (see \cref{cor: indep of basis k>=kappa}), we conclude that implementing \cref{alg: Iterative Construction} with $\ideal$-spherical polynomials $x_1, \dots, x_n$ and Method 1 for initialization recovers the same sequence of bounds given by the construction of~\cite{lovitz2024hierarchyeigencomputationspolynomialoptimization}.


\section{Ideals Amenable to Our Framework for Spectral Relaxations} \label{sec: SR amenable ideals}


In \cref{sec: Deriving Spectral Relaxations}, we identified a joint condition on a polynomial $p\in\R[\vec{x}]$ and an ideal $\ideal\subset\R[\vec{x}]$ that yields spectral relaxations for minimizing $p$ over the variety $V_\R(\ideal)$ and presented a systematic approach for their construction. In this section, we investigate ideals $\ideal$ for which our approach can be applied to derive spectral relaxations for minimizing any polynomial $p \in \R[\vec{x}]$ over $V_\R(\ideal)$. Specifically, \cref{sec: Characterization} gives a characterization of such ideals in terms of an Archimedean condition prevalent in the polynomial optimization literature and draws a connection between our framework and homogeneous optimization over the Euclidean sphere. \cref{sec: Examples} then discusses problem families and applications that are well-suited to our approach.

\subsection{An Archimedean Characterization} \label{sec: Characterization}

We characterize those ideals which are amenable to our framework for spectral relaxations in terms of an Archimedean condition. This requires a few additional concepts from algebraic geometry. 


\begin{definition} The \emph{quadratic module} of an ideal $\ideal\subset\R[\vec{x}]$ is defined as: $$\Sigma(\ideal)\coloneqq \left \{\sum_{i=1}^s q_i^2+ \ideal:\; q_1,\dots, q_s \in \R[\vec{x}], ~ s \in \N \right \}.$$
\end{definition}

In words, the quadratic module of an ideal is the collection of all (equivalence classes of) polynomials that can be certified as being nonnegative over its variety based on a SOS decomposition (see \cref{sec: Background}). The Archimedean condition of interest pertains to the quadratic module of an ideal. 

\begin{definition} The quadratic module $\Sigma(\ideal)$ of an ideal $\ideal\subset\R[\vec{x}]$ is \emph{Archimedean} in $\R[\vec{x}]/\ideal$ if there exists $N\in \N \mbox{ s.t. } (N-\sum_{i=1}^n x_i^2)+\ideal \in \Sigma(\ideal)$.
\end{definition} 

The quadratic module of an ideal $\ideal$ being Archimedean provides an algebraic certificate for the compactness of $V_\R(\ideal)$, as the condition implies that $V_\R(\ideal)$ is contained in a Euclidean ball of radius $\sqrt{N}$. This condition is commonly employed in the literature to derive convergent hierarchies of SOS relaxations for POPs~\cite{Lasserre2001, ParriloTesis, Parrilo2003}. The next result shows that Archimedeanity of $\Sigma(\ideal)$ is equivalent to the entire quotient ring $\R[\vec{x}] / \ideal$ being generated as an algebra by the equivalence classes mod $\ideal$ of some $\ideal$-spherical polynomials $h_1,\dots,h_m \in\R[\vec{x}]$.  The previous condition characterizes the ideals $\ideal$ for which we can derive spectral relaxations for minimizing any polynomial $p \in \R[\vec{x}]$ over $V_{\R}(\ideal)$, as our method from \cref{sec: Deriving Spectral Relaxations} applies to a POP with objective $p$ over $V_\R(\ideal)$ precisely when $p+\ideal\in\mbox{Alg}_\ideal(h_1,\dots,h_m)$ for some $\ideal$-spherical polynomials $h_1,\dots,h_m \in\R[\vec{x}]$.


\begin{proposition} Let $\ideal\subset \R[\vec{x}]$ be an ideal. The quadratic module $\Sigma(\ideal)$ of $\ideal$ is  Archimedean in $\R[\vec{x}]/\ideal$ if and only if there exist $\ideal$-spherical polynomials $h_1,\dots, h_m\in\R[\vec{x}]$ such that $h_1+\ideal, \dots, h_m+\ideal$ generate $\R[\vec{x}]/\ideal$ as an algebra. 
\label{prop:SR_amenable_ideals}
\end{proposition}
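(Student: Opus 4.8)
The plan is to prove the two implications separately. The forward direction is essentially a direct unpacking of the Archimedean certificate, while the converse rests on the classical observation that the ``bounded'' elements of $\R[\vec{x}]/\ideal$ relative to $\Sigma(\ideal)$ form a subalgebra. For the forward direction, suppose $N-\sum_{i=1}^n x_i^2+\ideal\in\Sigma(\ideal)$ for some $N\in\N$, $N\geq 1$; by definition of the quadratic module this means $N-\sum_{i=1}^n x_i^2\equiv_\ideal\sum_{j=1}^s q_j^2$ for some $q_1,\dots,q_s\in\R[\vec{x}]$, equivalently $N\equiv_\ideal\sum_{i=1}^n x_i^2+\sum_{j=1}^s q_j^2$. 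Dividing by $N$ shows that the tuple $\vec{h}\coloneqq\tfrac{1}{\sqrt{N}}(x_1,\dots,x_n,q_1,\dots,q_s)$ is $\ideal$-spherical. Since $x_i+\ideal=\sqrt{N}\,(h_i+\ideal)$ for $i\in[n]$, the subalgebra of $\R[\vec{x}]/\ideal$ generated by $h_1+\ideal,\dots,h_{n+s}+\ideal$ contains every $x_i+\ideal$, hence equals all of $\R[\vec{x}]/\ideal$, since the images of the coordinate variables generate the quotient. This settles one direction with essentially no work beyond the definitions.

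For the converse, write $B\coloneqq\R[\vec{x}]/\ideal$ and call $a\in B$ \emph{bounded} if $N\pm a\in\Sigma(\ideal)$ for some $N\in\N$ (identifying $N$ with $N\cdot 1_B$), and let $\mathcal{O}\subseteq B$ be the set of bounded elements. I would argue in three steps. First, each $h_i+\ideal$ is bounded: from $\sum_k h_k^2\equiv_\ideal 1$ we get $1-h_i^2\equiv_\ideal\sum_{k\neq i}h_k^2\in\Sigma(\ideal)$, and combining with the identity $2(1\pm h_i)=(1\pm h_i)^2+(1-h_i^2)$ yields $1\pm h_i\in\Sigma(\ideal)$, using that $\Sigma(\ideal)$ is closed under sums and under scaling by $\tfrac12=(\tfrac1{\sqrt2})^2$. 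Second, $\mathcal{O}$ is an $\R$-subalgebra of $B$: it contains $\R$ and is closed under sums and real scalars, and for products one uses that $\Sigma(\ideal)$ is closed under multiplication (a product of sums of squares is a sum of squares), so that if $N\pm a,\,N'\pm b\in\Sigma(\ideal)$ then
\[
N N' \pm a b \;=\; \tfrac12\big[(N+a)(N'\pm b) + (N-a)(N'\mp b)\big] \in \Sigma(\ideal),
\]
whence $ab$ is bounded (with bound $NN'\in\N$). Third, since $\mathcal{O}$ is a subalgebra containing every $h_i+\ideal$ and these generate $B$ by hypothesis, $\mathcal{O}=B$; in particular $x_i+\ideal\in\mathcal{O}$ for all $i$, so $\sum_{i=1}^n x_i^2+\ideal\in\mathcal{O}$, which is exactly the assertion that $\Sigma(\ideal)$ is Archimedean.

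The only step that goes beyond definition-chasing is the closure of $\mathcal{O}$ under multiplication, i.e.\ that the ``bounded'' elements form a ring; this is where the structure of $\Sigma(\ideal)$ (closure under products, not merely under multiplication by squares) is genuinely used, and I expect it to be the main point to state carefully. In our setting the two displayed identities make it elementary, so no real obstacle remains; everything else is bookkeeping with the definitions of $\ideal$-spherical tuples and of $\mathrm{Alg}_\ideal(\cdot)$.
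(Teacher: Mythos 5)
Your proof is correct, and the forward direction coincides with the paper's (both take $\tfrac{1}{\sqrt N}\{x_1,\dots,x_n,q_1,\dots,q_s\}$ and note it is $\ideal$-spherical and generating). The converse, however, follows a genuinely different route. The paper passes to a presentation of the quotient: it introduces the surjection $\phi:\R[\vec{y}]\to\R[\vec{x}]/\ideal$, $y_i\mapsto h_i+\ideal$, sets $\mathcal{J}=\ker\phi$, observes that $1-\sum_i y_i^2\in\mathcal{J}$ makes $\Sigma(\mathcal{J})$ Archimedean, and then invokes Marshall's equivalent characterization of Archimedeanity (Definition 5.2.1 and Corollary 5.2.4 in that reference) before transporting the certificate back through the isomorphism $\psi$. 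You instead work entirely inside $B=\R[\vec{x}]/\ideal$ and reprove the underlying fact from scratch: the bounded elements $\mathcal{O}=\{a: N\pm a\in\Sigma(\ideal)\text{ for some }N\}$ form an $\R$-subalgebra (your product identity is the standard one, and its use is legitimate here because $\Sigma(\ideal)$, being the image of the full SOS cone, is closed under products — a point you rightly single out); each $h_i+\ideal$ is bounded via $1-h_i^2\equiv_\ideal\sum_{k\neq i}h_k^2$ and $2(1\pm h_i)=(1\pm h_i)^2+(1-h_i^2)$; hence $\mathcal{O}=B$, and boundedness of $\sum_i x_i^2+\ideal$ is exactly Archimedeanity. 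What your approach buys is self-containedness: no auxiliary ring, no external citation, and the key mechanism (bounded elements form a ring) is made explicit. What the paper's approach buys is brevity and reuse: the presentation $\psi:\R[\vec{y}]/\mathcal{J}\to\R[\vec{x}]/\ideal$ it constructs is not a throwaway device but is used again in \cref{prop:isomorphism_SV_of_Sphere} and \cref{cor: reformulation over the sphere} to identify $V_\R(\ideal)$ with a subvariety of the sphere, so the detour through $\R[\vec{y}]$ earns its keep later. The only cosmetic point in your write-up is the harmless normalization $N\geq 1$ (if $N=0$ works then so does $N=1$, since $1\in\Sigma(\ideal)$ and $\Sigma(\ideal)$ is closed under addition).
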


\begin{proof} Suppose $\Sigma(\ideal)$ is Archimedean in $\R[\vec{x}]/\ideal$, and let $q_1,\dots,q_s\in\R[\vec{x}]$ and $N\in\N$ be such that $N-\sum_{i=1}^nx_i^2-\sum_{j=1}^s q_j^2 \in \ideal$. Then $\frac{1}{\sqrt{N}}\{x_1, \dots, x_n, q_1,\dots, q_s\}$ is a set of $\ideal$-spherical polynomials whose equivalence classes mod $\ideal$ generate $\R[\vec{x}]/\ideal$ as an algebra. Next, suppose there exist $\ideal$-spherical polynomials $h_1,\dots,h_m\in \R[\vec{x}]$ such that $\mbox{Alg}_\ideal(h_1, \dots, h_m)=\R[\vec{x}]/\ideal$, and consider the surjective $\R$-algebra homomorphism ${\phi}:\R[y_1,\dots,y_m]\to\R[\vec{x}]/\ideal$ defined by ${\phi}(y_i)\coloneqq h_i+\ideal$ for $i=1, \dots, m$. By the first isomorphism theorem, $\mathcal{J}\coloneq \mbox{ker}(\phi)$ is an ideal of $\R[\vec{y}]$ and the map $\psi:\R[\vec{y}]/\mathcal{J}\to \R[\vec{x}]/\ideal$, given by $\psi(y_i+\mathcal{J})\coloneq h_i+\ideal$ for $i=1, \dots, m$, defines an isomorphism of $\R$-algebras. Because $1-\left(\sum_{i=1}^m y_i^2\right)\in\mathcal{J}$, the quadratic module $\Sigma(\mathcal{J})$ of $\mathcal{J}$ is Archimedean in $\R[\vec{y}]/\mathcal{J}$. By an equivalent characterization of Archimedeanity (see Definition 5.2.1 and Corollary 5.2.4 in~\cite{marshall2008positive}), this implies the existence of some $N\in \N$ such that $N-\psi^{-1}(\sum_{i=1}^m x_i^2+\ideal)\in\Sigma(\mathcal{J})$. Since $\psi$ preserves SOS and constants, the result follows.
\end{proof}

Thus, Archimedeanity of the quadratic module $\Sigma(\ideal)$ of an ideal $\ideal \subset \R[\vec{x}]$ provides the key condition under which our framework can be applied to derive spectral bounds on any polynomial $p \in \R[\vec{x}]$ over $V_\R(\ideal)$. As elaborated in \cref{sec: Examples}, any POP with a bounded constraint set can be reformulated as a POP over the variety of an ideal with Archimedean quadratic module, enabling the formulation of spectral relaxations for general POPs over bounded algebraic varieties.

\paragraph{An equivalent formulation over a subvariety of the sphere.} We conclude this subsection by highlighting a connection between our framework of spectral relaxations and homogeneous optimization over subsets of the Euclidean sphere. Specifically, given an ideal $\ideal\subset\R[\vec{x}]$ and $\ideal$-spherical polynomials $h_1, \dots h_m\in\R[\vec{x}]$ such that $\text{Alg}_\ideal(h_1, \dots, h_m)=\R[\vec{x}]/\ideal$, we show that the polynomial map defined by $h_1, \dots, h_m$ yields an isomorphism between the variety $V_\R(\ideal)$ and a subvariety of the sphere in $\R^m$. This enables the minimization of any polynomial over $V_\R(\ideal)$ to be reformulated as a problem in which a homogeneous polynomial is minimized over a subset of the sphere, in a manner that is compatible with our framework: applying the construction from \cref{sec: Deriving Spectral Relaxations} to either formulation yields the same sequence of spectral bounds on the underlying optimal value.  


\begin{proposition}\label{prop:isomorphism_SV_of_Sphere}
Let $\ideal\subset\R[\vec{x}]$ be an ideal such that $\R[\vec{x}]/\ideal=\mbox{Alg}_\ideal(h_1, \dots, h_m)$ for some $\ideal$-spherical polynomials $h_1, \dots h_m\in \R[\vec{x}]$. The polynomial map $\Phi:V_{\R}(\ideal)\to\R^m$, given by $\Phi(\vec{x})\coloneq(h_1(\vec{x}), \dots, h_m(\vec{x}))$, defines an isomorphism between the variety $V_{\R}(\ideal)$ and a subvariety of the $(m-1)$-dimensional Euclidean sphere $\mathcal{S}^{m-1}$. 
\end{proposition}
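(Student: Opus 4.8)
The plan is to identify the image $\Phi(V_\R(\ideal))$ with the real variety of the kernel ideal $\mathcal{J}\coloneq\ker\phi\subseteq\R[\vec{y}]$ already introduced in the proof of \cref{prop:SR_amenable_ideals}, where $\phi:\R[y_1,\dots,y_m]\to\R[\vec{x}]/\ideal$ is the surjective $\R$-algebra homomorphism sending $y_i\mapsto h_i+\ideal$, and then to exhibit an explicit polynomial inverse for $\Phi$ built from the fact that each coordinate $x_j$ equals, modulo $\ideal$, a polynomial in $h_1,\dots,h_m$.

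First I would record two consequences of the hypotheses. Since $\sum_{i=1}^m h_i^2\equiv_\ideal 1$, every $\vec{x}\in V_\R(\ideal)$ satisfies $\sum_i h_i(\vec{x})^2=1$, so $\Phi$ maps $V_\R(\ideal)$ into $\mathcal{S}^{m-1}$; equivalently $1-\sum_i y_i^2\in\mathcal{J}$, so $V_\R(\mathcal{J})$ is a subvariety of $\mathcal{S}^{m-1}$. Since $\phi$ is surjective, for each $j\in[n]$ there is a polynomial $f_j\in\R[\vec{y}]$ with $\phi(f_j)=x_j+\ideal$, i.e.\ $x_j\equiv_\ideal f_j(h_1,\dots,h_m)$; I collect these into the polynomial map $\Psi:\R^m\to\R^n$, $\Psi(\vec{y})\coloneq(f_1(\vec{y}),\dots,f_n(\vec{y}))$, which will serve as the candidate inverse of $\Phi$.

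The heart of the argument is a single recurring step: a polynomial identity valid modulo one of $\ideal$, $\mathcal{J}$ becomes, after applying $\phi$, a statement that some polynomial lies in $\mathcal{J}$, and membership in an ideal forces vanishing on its real variety. I would invoke this four times. (i) For $\vec{x}\in V_\R(\ideal)$, $f_j(\Phi(\vec{x}))=f_j(h_1(\vec{x}),\dots,h_m(\vec{x}))=x_j$ since $x_j-f_j(\vec{h})\in\ideal$; hence $\Psi\circ\Phi=\mathrm{id}$ on $V_\R(\ideal)$, and in particular $\Phi$ is injective. (ii) For $g\in\mathcal{J}$ and $\vec{x}\in V_\R(\ideal)$ one has $g(h_1,\dots,h_m)\in\ideal$ (since $\phi(g)=0$), so $g(\Phi(\vec{x}))=0$; hence $\Phi(V_\R(\ideal))\subseteq V_\R(\mathcal{J})$. (iii) For any $g\in\ideal$, $\phi\big(g(f_1,\dots,f_n)\big)=g\big(\phi(f_1),\dots,\phi(f_n)\big)=g(x_1+\ideal,\dots,x_n+\ideal)=0+\ideal$, so $g(f_1,\dots,f_n)\in\mathcal{J}$ and therefore $g(\Psi(\vec{y}))=0$ for every $g\in\ideal$ whenever $\vec{y}\in V_\R(\mathcal{J})$, i.e.\ $\Psi(V_\R(\mathcal{J}))\subseteq V_\R(\ideal)$. (iv) Similarly $\phi\big(h_i(f_1,\dots,f_n)-y_i\big)=(h_i+\ideal)-(h_i+\ideal)=0$, so $h_i(f_1,\dots,f_n)-y_i\in\mathcal{J}$ and hence $h_i(\Psi(\vec{y}))=y_i$ for $\vec{y}\in V_\R(\mathcal{J})$, i.e.\ $\Phi\circ\Psi=\mathrm{id}$ on $V_\R(\mathcal{J})$. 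Steps (iii) and (iv) together show $\Phi$ surjects onto $V_\R(\mathcal{J})$, and combined with (i) that $\Phi$ restricts to a bijection $V_\R(\ideal)\to V_\R(\mathcal{J})$ with polynomial inverse $\Psi$.

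I do not expect a genuine obstacle here; the only real work is bookkeeping — tracking which polynomial ring each object lives in and applying the $\R$-algebra homomorphism $\phi$ correctly through compositions such as $g(f_1(\vec{y}),\dots,f_n(\vec{y}))$. The point I would be most careful to pin down is the precise meaning of ``isomorphism of varieties'' in the conclusion: namely that $\Phi=(h_1,\dots,h_m)$ and $\Psi=(f_1,\dots,f_n)$ are polynomial maps on $\R^n$ and $\R^m$ that restrict to mutually inverse bijections between $V_\R(\ideal)$ and the subvariety $V_\R(\mathcal{J})\subseteq\mathcal{S}^{m-1}$, the algebraic counterpart being the isomorphism $\R[\vec{y}]/\mathcal{J}\cong\R[\vec{x}]/\ideal$ obtained in the proof of \cref{prop:SR_amenable_ideals}.
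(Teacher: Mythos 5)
Your proposal is correct and follows essentially the same route as the paper: you construct the inverse map from polynomials $f_j$ representing $\psi^{-1}(x_j+\ideal)$ (the paper's $q_j$ and $\Gamma$ are exactly your $f_j$ and $\Psi$), identify the image with $V_\R(\mathcal{J})$ for $\mathcal{J}=\ker\phi$, and use $1-\sum_i y_i^2\in\mathcal{J}$ to place it inside $\mathcal{S}^{m-1}$. The only difference is that you spell out the verifications (i)--(iv) that the paper compresses into ``one can check,'' which is a welcome level of detail but not a different argument.
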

\begin{proof} 
Consider the ideal $\mathcal{J}\subset\R[\vec{y}]$ and the $\R$-algebra isomorphism  $\psi:\R[\vec{y}]/\mathcal{J}\to \R[\vec{x}]/\ideal$ from the proof of \cref{prop:SR_amenable_ideals}. Naturally, there exist $q_1, \dots, q_n\in \R[\vec{y}]$ such that $\psi^{-1}(x_j+\ideal)=q_j+\mathcal{J}, \mbox{ for } j=1, \dots, n$. One can check that the polynomial map $\Gamma:V(\mathcal{J})\to \R^n$ defined by $\Gamma(\vec{y})\coloneq(q_1(\vec{y}), \dots, q_n(\vec{y}))$ is such that $\Phi\circ\Gamma$ and $\Gamma\circ\Phi$ correspond to the identity functions over $V_\R(\mathcal{J})$ and $V_{\R}(\ideal)$, respectively. Therefore, $\Phi$ is an isomorphism of varieties, and we have that $\Phi(V_{\R}(\ideal))=V_\R(\mathcal{J})$. Since $\sum_{i=1}^m h_i^2-1$ lies in $\ideal$, then $\sum_{i=1}^m y_i^2-1\in\mathcal{J}$, and we conclude that $\Phi(V_{\R}(\ideal))\subset\mathcal{S}^{m-1}$. 
\end{proof}

\begin{corollary} \label{cor: reformulation over the sphere} Under the assumptions of \cref{prop:isomorphism_SV_of_Sphere}, and without loss of generality, any POP over $V_\R(\ideal)$ can be reformulated as the minimization of a homogeneous polynomial of even degree over a subvariety of $\mathcal{S}^{m-1}$. Moreover, there exists a one-to-one correspondence between the hierarchies of spectral relaxations obtained from applying \cref{alg: Iterative Construction} to the reformulated POP and those obtained from applying it to the original problem.  
\end{corollary}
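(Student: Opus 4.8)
The plan is to read everything off the $\R$-algebra isomorphism $\psi:\R[\vec{y}]/\mathcal{J}\to\R[\vec{x}]/\ideal$ and the variety isomorphism $\Phi:V_\R(\ideal)\to V_\R(\mathcal{J})\subseteq\mathcal{S}^{m-1}$ constructed in the proofs of \cref{prop:SR_amenable_ideals} and \cref{prop:isomorphism_SV_of_Sphere}, where $\mathcal{J}=\ker(\phi)$ for the homomorphism $\phi(y_i)=h_i+\ideal$. For the reformulation claim I would first apply the normalization used in the proof of \cref{lemma: sufficient condition}: replacing $h_1,\dots,h_m$ by $h_1/\sqrt{2},\dots,h_m/\sqrt{2},1/\sqrt{2}$ preserves both $\ideal$-sphericity and the property of generating $\R[\vec{x}]/\ideal$ as an algebra, so without loss of generality one of the $h_i$ is a positive constant (this is what the ``without loss of generality'' in the statement absorbs; it enlarges $m$ by one). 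Under this normalization the subspaces $U_{2k,\ideal}(\vec{h})$ are nested and their union is all of $\R[\vec{x}]/\ideal$, so for any $p$ there is a smallest $\kappa$ with $p+\ideal\in U_{2\kappa,\ideal}(\vec{h})$. Fixing a representation $p\equiv_\ideal Q(h_1,\dots,h_m)$ with $Q\in\R[\vec{y}]$ homogeneous of degree $2\kappa$, set $\hat{p}\coloneq Q$. Then $\hat{p}$ is a homogeneous polynomial of even degree, $\psi(\hat{p}+\mathcal{J})=p+\ideal$, and $p(\vec{x})=\hat{p}(\Phi(\vec{x}))$ for every $\vec{x}\in V_\R(\ideal)$; since $\Phi$ maps $V_\R(\ideal)$ bijectively onto $V_\R(\mathcal{J})\subseteq\mathcal{S}^{m-1}$, minimizing $p$ over $V_\R(\ideal)$ is the same as minimizing $\hat{p}$ over the subvariety $V_\R(\mathcal{J})$ of the sphere, which is the claimed reformulation.

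For the correspondence of hierarchies I would run \cref{alg: Iterative Construction} on the reformulated data $(\hat{p},\mathcal{J},\vec{y})$ -- with $\vec{y}=(y_1,\dots,y_m)$ the coordinate functions, which are $\mathcal{J}$-spherical since $1-\sum_i y_i^2\in\mathcal{J}$ -- and match it step for step against the run on the original data $(p,\ideal,\vec{h})$. The structural fact that drives the comparison is that $\psi$, being an isomorphism of unital $\R$-algebras with $\psi(y_i+\mathcal{J})=h_i+\ideal$, restricts for every $k$ to a linear isomorphism $U_{k,\mathcal{J}}(\vec{y})\to U_{k,\ideal}(\vec{h})$; hence the dimensions $d_k$ coincide, and since $\psi(\hat{p}+\mathcal{J})=p+\ideal$ and $\psi(1+\mathcal{J})=1+\ideal$ the initial level $\kappa$ is the same for both. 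I would then take, on the $\ideal$-side, the bases $\vec{z_k}+\ideal\coloneq\psi(\vec{w_k}+\mathcal{J})$ given by $\psi$-images of bases $\vec{w_k}+\mathcal{J}$ of $U_{k,\mathcal{J}}(\vec{y})$; applying $\psi^{-1}$ to the defining relations shows that the matrices $\mat{P}$, $\mat{L_{k+1}}$ and $\mat{T_k}$ produced by the two runs coincide, and that for any $q$ the least-squares Gram matrix $\mat{Y}(q)$ of \cref{eq: GM w.r.t. h kron kappa} on the $\ideal$-side agrees with the one on the $\mathcal{J}$-side for a representative of $\psi^{-1}(q+\ideal)$, because $\psi$ identifies the two affine-linear feasible sets of symmetric matrices while leaving the Frobenius objective untouched. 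Pushing this through \cref{def: GM method 1}/\cref{def: GM method 2} and the recursion of \cref{alg: Iterative Construction} and inducting on $k$, the two runs produce identical matrices $\mat{M_k}(1)$ and identical matrices $\mat{M_k}(p)=\mat{M_k}(\hat{p})$ for all $k\geq\kappa$; consequently $\lambda_{\min}(\mat{M_k}(p),\mat{M_k}(1))=\lambda_{\min}(\mat{M_k}(\hat{p}),\mat{M_k}(1))$ for all $k$. Invoking \cref{cor: indep of basis k>=kappa} to remove the dependence on basis choices, I conclude that the two families of hierarchies are in one-to-one correspondence, with corresponding hierarchies yielding the same sequence of lower bounds.

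The main obstacle is ensuring $\hat{p}$ can be taken homogeneous of \emph{even} degree. An arbitrary algebra representation of $p+\ideal$ in the $h_i$ need not, after substitution, reduce modulo $\mathcal{J}$ to an even-degree homogeneous polynomial: the relation $\sum_i y_i^2\equiv_\mathcal{J}1$ only lets one pad a monomial's degree in steps of two, so odd-degree contributions are unreachable (for instance $x+\ideal$ is not congruent mod $\langle x^2+y^2-1\rangle$ to any even-degree form in $x,y$). This is precisely what the normalization in the first step repairs: once a positive constant $c$ is among the $\ideal$-spherical polynomials, multiplication by the corresponding coordinate (with $y_i\equiv_\mathcal{J}c$) pads degree by one, and the nestedness of $\{U_{2k,\ideal}(\vec{h})\}_k$ then guarantees the required homogeneous representation $Q$ exists. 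Everything else is bookkeeping -- checking that $\psi$, as a homomorphism of unital $\R$-algebras, commutes with the Kronecker products and linear-least-squares operations underlying \cref{alg: Iterative Construction} -- and is routine once the $\psi$-correspondence of bases is in place.
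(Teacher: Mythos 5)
Your proposal is correct and follows essentially the same route as the paper: the even-degree homogeneous reformulation comes from the constant-padding normalization of \cref{lemma: sufficient condition} (the paper writes the representative as $(\vec{y}^{\otimes\kappa})^T\mat{M}(p)\vec{y}^{\otimes\kappa}$, which is exactly your homogeneous $Q$ of degree $2\kappa$), and the correspondence of hierarchies is obtained, as in the paper, by transporting subspaces, bases, and all operations of \cref{alg: Iterative Construction} through the $\R$-algebra isomorphism $\psi$ and its inverse. Your additional checks (matching of $\mat{P}$, $\mat{L_{k+1}}$, $\mat{T_k}$, and the least-squares Gram matrices, plus the appeal to \cref{cor: indep of basis k>=kappa}) simply spell out details the paper summarizes as $\psi^{-1}$ ``respecting all the operations.''
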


\begin{proof} Let $p\in \R[\vec{x}]$. Since $p+\ideal\in \mbox{Alg}_\ideal(h_1, \dots, h_m)$, there exist without loss of generality $\kappa\in \N$ and $\mat{M}(p)\in \S^{m^\kappa}$ such that $p\equiv_\ideal (\vec{h}^{\otimes{\kappa}})^T\mat{M}(p)\vec{h}^{\otimes{\kappa}}$ (see \cref{lemma: sufficient condition}). Minimizing $p$ over $V_{\R}(\ideal)$ is then equivalent to minimizing the homogeneous polynomial $q\coloneq (\vec{y}^{\otimes{\kappa}})^T\mat{M}(p)\vec{y}^{\otimes{\kappa}}\in\R[\vec{y}]$ over $\Phi(V_{\R}(\ideal))$, which, by \cref{prop:isomorphism_SV_of_Sphere}, corresponds to the subvariety of $\mathcal{S}^{m-1}$ defined by the ideal $\mathcal{J}\subset\R[\vec{y}]$ comprising all polynomial relations among the $\ideal$-spherical polynomials $h_1, \dots, h_m$ mod $\ideal$. 

The bijection between the spectral hierarchies produced by \cref{alg: Iterative Construction} when applied to the reformulated and original POPs is induced by the inverse $\psi^{-1}$ of the $\R$-algebra isomorphism $\psi$ from the proofs of \cref{prop:SR_amenable_ideals,prop:isomorphism_SV_of_Sphere}. Specifically, $\psi^{-1}$ preserves both the vector space and the ring structures of $\R[\vec{x}]/\ideal$: it maps each subspace $U_{k,\ideal}(\vec{h})\leq\R[\vec{x}]/\ideal$ isomorphically to a subspace $U_{k,\mathcal{J}}(\vec{y})\leq\R[\vec{y}]/\mathcal{J}$, transforms bases $\vec{z_k}+\ideal$ for $U_{k,\ideal}(\vec{h})$ into bases $\vec{w_k}+\mathcal{J}$ for $U_{k,\mathcal{J}}(\vec{y})$, and respects all the operations involved in the construction of a spectral hierarchy via \cref{alg: Iterative Construction}.
\end{proof}

Although \cref{cor: reformulation over the sphere} may not be practical -- applying our construction to the spherical reformulation of a POP over $V_\R(\ideal)$ requires one to compute the ideal $\mathcal{J}$ determined by all polynomial relations among the $\ideal$-spherical polynomials $h_1, \dots, h_m$ mod $\ideal$ -- it carries some interesting conceptual implications.  Specifically, the analysis of various properties of our spectral hierarchies, such as convergence rates of the spectral lower bounds to the optimal value of a POP, can be carried out by focusing solely on POPs with a very specific structure: those in which an even-degree homogeneous polynomial is minimized over a subvariety of the Euclidean sphere.  We do not investigate these directions further in the present paper.

\subsection{Examples} \label{sec: Examples}
The approach presented in \cref{sec: Deriving Spectral Relaxations} for constructing spectral relaxations for the problem of minimizing a polynomial $p\in\R[\vec{x}]$ over the variety $V_\R(\ideal)$ requires access to $\ideal$-spherical polynomials $h_1, \dots, h_m\in \R[\vec{x}]$ such that $p+\ideal$ is in the subalgebra of $\R[\vec{x}]/\ideal$ generated by $h_1+\ideal, \dots, h_m+\ideal$. We describe in this Section~some common problem families for which the $\ideal$-spherical polynomials $h_1, \dots, h_m$ are readily available.\\


\noindent{\bf Optimization over the Sphere:}  A manifest candidate for the application of our framework from \cref{sec: Deriving Spectral Relaxations} is polynomial optimization over the Euclidean sphere. POPs over the sphere have applications in several domains \cite{Fawzi} and have inspired the development of tailored hierarchies for their approximation~\cite{Cristancho, lovitz2024hierarchyeigencomputationspolynomialoptimization}. \\

\noindent {\bf Combinatorial Optimization:} Consider a combinatorial optimization problem:
\begin{equation*}\label{OP: Opt over Hypercube}
\begin{aligned}
    \min_{\vec{x}\in \R^n} ~~~ p(\vec{x}) ~~~ \mbox{s.t.} ~~~ & q_1(\vec{x})=\dots=q_s(\vec{x})=0, ~ \vec{x}\in \{-1,1\}^n. 
\end{aligned}    
\end{equation*} 
Apparent instances of this formulation include the maximum-cut, boolean satisfiability, knapsack, stable set, and set covering problems. The constraint set here is the variety $V_{\R}(\ideal)$ of the ideal $\ideal=\langle x_1^2-1,\dots, x_n^2-1, q_1, \dots, q_s \rangle$, which is clearly amenable to spectral relaxations, as the polynomials $h_i\coloneqq \frac{x_i}{\sqrt{n}}$, for $i=1, \dots, n$, are $\ideal$-spherical and their equivalence classes mod $\ideal$ generate $\R[\vec{x}]/\ideal$ as an algebra.\\


\noindent {\bf Optimization under Orthogonality Constraints:} Consider a POP in which a polynomial function of the entries of a matrix variable $\mat{X} \in \R^{n\times r}$ is minimized subject to one of the following constraint sets $\mathcal{M}$ -- the orthogonal group $\mathrm{O}_n \coloneq \{ \mat{X} \in \R^{n \times n} : \mat{X}^T\!\mat{X} =\mat{X}\mat{X}^T= \mat{I} \}$, the special orthogonal group $\mathrm{SO}_n \coloneq \{ \mat{X} \in \mathrm{O}_n : \det(\mat{X}) = 1 \}$, the Stiefel manifold $\mathrm{St}_{n,r} \coloneq \{ \mat{X} \in \R^{n \times r} : \mat{X}^T \mat{X} = \mat{I}_r \}$, or the oblique manifold (i.e., the set of ${n\times r}$ real matrices with unit-norm columns). Examples of POPs defined over such sets can be found in a wide range of applications \cite{Burer2023, edelmanORTH, procrustesProbs,  Hartley2013, Wen}. Observe that for all the constraint sets $\mathcal{M}$ described here, the matrices $\mat{X}\in\mathcal{M}$ have unit-norm columns. Therefore, after appropriate normalization, the entries of the variable $\mat{X}$ form a set of $\ideal$-spherical polynomials that generate $\R[\mat{X}]/\ideal$ as an algebra, thereby facilitating the derivation of our spectral relaxations. Further generalizations in which a polynomial objective $p\in\R[\mat{X}_1, \dots, \mat{X}_s]$ is optimized over $\mat{X}_1, \dots, \mat{X}_s\in\mathcal{M}$, with $\mathcal{M}$ being any of the sets discussed above, are also well-suited to our framework. Two notable examples are the little Grothendiek problems over $\mathrm{O}_n$ and $\mathrm{St}_{n,r}$, for which SDP relaxations have been studied~\cite{Bandeira2013ApproximatingTL}, along with applications to the orthogonal Procrustes problem~\cite{procrustesProbs}, global registration, and the common lines problem, which arises in 3D molecular reconstruction from cryo-EM data~\cite{Singer2011}. \\  


\noindent {\bf Optimization over Bounded Algebraic Varieties:} 
Our framework from \cref{sec: Deriving Spectral Relaxations} can be applied to derive bounds on any POP over a bounded algebraic variety $V_\R(\ideal)$, provided access to the radius $R$ of some Euclidean ball containing $V_\R(\ideal)$. Indeed, by adding an extra variable $y$ to the problem, along with the constraint $R^2-\sum_{i=1}^nx_i^2=y^2$, one obtains an equivalent POP over the variety of an ideal $\ideal'$ of $\R[\vec{x},y]$ for which $\frac{1}{R}\{x_1, \dots, x_n, y\}$ constitutes a set of $\ideal'$-spherical polynomials whose equivalence classes mod $\ideal'$ generate $\R[\vec{x},y]/\ideal'$ as an algebra. In \cref{sec: BDTV}, we implement this strategy to lower bound the distance from a point to a variety.



\section{Dual Perspective}\label{sec: Dual Perspective}


In \cref{sec: Deriving Spectral Relaxations}, we identify algebraic structure that renders the POP~\cref{POP} defined by a polynomial $p\in\R[\vec{x}]$ and an ideal $\ideal\subset\R[\vec{x}]$ amenable to spectral relaxations. Given such structure, we find a subspace $U$ of  $\R[\vec{x}]/\ideal$ containing the quotient polynomial $p+\ideal$ and derive a sequence of spectral relaxations for \cref{POP} of the form:
\begin{equation}\label{OP: SOS side relax}
    \lambda_{\min}(\mat{M_k}(p), \mat{M_k}(1)) ~ = ~ \max_\gamma ~~~ \gamma ~~~ \mbox{s.t.} ~~~ \mat{M_k}(p)-\gamma \mat{M_k}(1)\succeq 0,
\end{equation} where for each $k$, the matrices $\mat{M_k}(p)$ and $\mat{M_k}(1)\succ 0$ are linear functions of the coefficients of $1+\ideal$ and $p+\ideal$, respectively, relative to any fixed basis for $U$. Consider the dual of \cref{OP: SOS side relax}: 
\begin{equation}\label{OP: Moment side relax}
\begin{aligned}
    \lambda_{\min}(\mat{M_k}(p), \mat{M_k}(1)) ~ = ~ \min_{\mat{X}\in \S^{d_k}} ~~~ \langle \mat{X},\mat{M_k}(p) \rangle ~~ \mbox{s.t.} ~~\langle \mat{M_k}(1),\mat{X}\rangle=1, ~ \mat{X}\succeq 0.
\end{aligned}
\end{equation}
In \cref{sec: Derive Moment Relaxation}, we will show that \cref{POP} can be reformulated as the minimization of a linear functional over a set $\mathcal{C}$ expressed in terms of the variety $V_{\R}(\ideal)$ and fixed basis elements for the subspace $U$. From this perspective, we will show that the dual problem \cref{OP: Moment side relax} can be viewed as a structured SDP in which the same linear objective is minimized but over a convex outer approximation of $\mathcal{C}$.  These outer approximations are highly structured; they are linear images of a base of the cone of positive-semidefinite matrices. 

\begin{definition} A set $\mathcal{S}\subset \R^d$ is a \emph{spectratope} if it is of the form: $$\mathcal{S}\coloneq \{\mathcal{L}(\mat{X}): \langle \mat{B},\mat{X} \rangle=b, \mat{X}\succeq 0 \}, $$
where $\mathcal{L}: \S^{D}\to \R^d$ is a linear map, $\mat{B}\in \S_{++}^{D}$ is positive definite, and $b>0$. 
\end{definition}
Observe that for $\vec{c}\in\R^d$, the minimum of the functional $f(\vec{w})=\langle \vec{c}, \vec{w} \rangle$ over the spectratope $\mathcal{S}\subset\R^\ell$ corresponds to the minimum generalized eigenvalue $\lambda_{\min}(\mathcal{L}^\star(\vec{c}), \frac{1}{b}\mat{B})$. Therefore, spectratopes correspond to projected spectrahedra over which linear minimization can be reduced to performing an eigencomputation. In \cref{sec: Lambda Bodies}, we demonstrate how the ideas developed in \cref{sec: Derive Moment Relaxation} can be used to obtain spectratope outer approximations of $V_{\R}(\ideal)$, which we contrast with previous outer approximations obtained from SOS methods. 

\subsection{Dual of a Spectral Relaxation}\label{sec: Derive Moment Relaxation}




Assume that the POP~\cref{POP} is amenable to our framework of spectral relaxations, and let $\vec{h}\in\R[\vec{x}]^m$ be a vector of $\ideal$-spherical polynomials such that $p+\ideal$ lies in the $d_{2k}$-dimensional vector subspace $U_{2k, \ideal}(\vec{h})$ of $\R[\vec{x}]/\ideal$ for all integers $k\geq \kappa$, for some $\kappa\in \N$ (see \cref{sec: Initial SR}).  Fix a choice of basis $\vec{z_{2\kappa}}+\ideal$ for the subspace $U_{2\kappa,\ideal}(\vec{h})$ and denote by $\vec{c}(p)_{2\kappa}$ the coefficient vector of $p+\ideal$ in the basis $\vec{z_{2\kappa}}+\ideal$. Since $p\equiv_\ideal \langle \vec{c}(p)_{2\kappa}, \vec{z_{2\kappa}}\rangle$, we have that \cref{POP} is equivalent to
\begin{equation} \label{OP: POP reformulation kappa}
    \min_{\vec{y}\in \R^{d_{2\kappa}}} ~~~ \langle \vec{c}(p)_{2\kappa}, \vec{y} \rangle ~~~ \mbox{s.t.} ~~~ \vec{y}\in\mathcal{C}\coloneq\{\vec{z_{2\kappa}}(\vec{x}) : \vec{x}\in V_{\R}(\ideal)\}.
\end{equation} 
In words, \cref{POP} can be reformulated as a linear minimization problem over the set $\mathcal{C}$ given by the image of $V_{\R}(\ideal)$ under the polynomial map $\vec{x}\mapsto \vec{z_{2\kappa}}(\vec{x})$. 

Next, fix $k\geq\kappa$, and let $\mat{M_k}(p)$ and $\mat{M_k}(1)$ denote Gram matrices for $p$ and $1$, respectively, with respect to a basis $\vec{z_k}+\ideal$ for the subspace $U_{k,\ideal}(\vec{h})$ (obtained via \cref{alg: Iterative Construction}).  We show that the dual problem \cref{OP: Moment side relax} defined by $\mat{M_k}(p)$ and $\mat{M_k}(1)$ corresponds to a structured SDP in which the linear functional $\vec{c}(p)_{2\kappa}$ from \cref{OP: POP reformulation kappa} is minimized over a spectratope containing the set $\mathcal{C}$.  Given the fixed basis $\vec{z_{2\kappa}}+\ideal$ for $U_{2\kappa, \ideal}(\vec{h})$, the map $\mat{M_k}(\cdot)$ can be viewed as a linear function from $\R^{d_{2\kappa}}$ to the space of symmetric matrices $\S^{d_k}$, i.e., for $q+\ideal\in U_{2\kappa, \ideal}(\vec{h})$ the coefficients $\vec{c}(q)_{2\kappa}$ of $q+\ideal$ (relative to the basis $\vec{z_{2\kappa}}+\ideal$) are mapped to a Gram matrix $\mat{M_k}(q)$ for $q$ with respect to the basis $\vec{z_k}+\ideal$ for $U_{k,\ideal}(\vec{h})$.  This perspective allows us to interpret the dual spectral relaxation \cref{OP: Moment side relax} as the following linear minimization problem over a spectratope in $\R^{d_{2\kappa}}$: 
\begin{equation}\label{OP: Moment relax 2}
    \min_{\vec{y}\in \R^{d_{2\kappa}}} ~~~ \langle \vec{c}(p)_{2\kappa}, \vec{y}\rangle  ~~~ \mbox{s.t.} ~~~ \vec{y}\in \mathcal{S}_k\coloneqq\{\mat{M_k}^\star(\mat{X}): \langle \mat{M_k}(1), \mat{X}\rangle =1, \mat{X}\succeq 0\}, 
\end{equation}
where $\mat{M_k}^\star$ is the adjoint of the linear map $\mat{M_k}:\R^{d_{2\kappa}}\to\S^{d_k}$.  Note that the objective of \cref{OP: Moment relax 2} coincides with that of \cref{OP: POP reformulation kappa}. Our final result of the section establishes that the feasible sets $\mathcal{S}_k$ for \cref{OP: Moment relax 2} form a nested sequence of spectratope outer approximations to the constraint set $\mathcal{C}$ of \cref{OP: POP reformulation kappa}. 


\begin{lemma} Let $\mathcal{C}$ and $\mathcal{S}_k$, for $k\geq \kappa$, be defined as in \cref{OP: POP reformulation kappa} and \cref{OP: Moment relax 2}. We have that  $\mathcal{C}\subset\dots\subset \mathcal{S}_{k+1}\subset\mathcal{S}_{k}\subset\dots$. 
\end{lemma}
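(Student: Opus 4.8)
The plan is to establish the two containments separately: first that $\mathcal{C}\subset\mathcal{S}_k$ for every $k\geq\kappa$, and then that $\mathcal{S}_{k+1}\subset\mathcal{S}_k$.

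For the inclusion $\mathcal{C}\subset\mathcal{S}_k$, I would take an arbitrary point $\vec{y}=\vec{z_{2\kappa}}(\vec{x})$ with $\vec{x}\in V_\R(\ideal)$ and exhibit a feasible $\mat{X}\succeq 0$ for \cref{OP: Moment relax 2} that maps to it under $\mat{M_k}^\star$. The natural candidate is the rank-one matrix $\mat{X}=\vec{z_k}(\vec{x})\vec{z_k}(\vec{x})^T$ (up to normalization). Since $\mat{M_k}(q)$ is a Gram matrix for $q$ with respect to $\vec{z_k}+\ideal$, we have $\vec{z_k}(\vec{x})^T\mat{M_k}(q)\vec{z_k}(\vec{x}) = q(\vec{x})$ for all $\vec{x}\in V_\R(\ideal)$ and all $q+\ideal\in U_{2\kappa,\ideal}(\vec{h})$; that is, $\langle \mat{M_k}(q),\vec{z_k}(\vec{x})\vec{z_k}(\vec{x})^T\rangle = q(\vec{x}) = \langle \vec{c}(q)_{2\kappa}, \vec{z_{2\kappa}}(\vec{x})\rangle$. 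Taking $q=1$ gives the normalization: $\langle\mat{M_k}(1),\vec{z_k}(\vec{x})\vec{z_k}(\vec{x})^T\rangle = 1$, so $\mat{X}=\vec{z_k}(\vec{x})\vec{z_k}(\vec{x})^T$ is already feasible with no rescaling needed. And by the defining adjoint relation, for every coefficient vector $\vec{c}$ we have $\langle \vec{c}, \mat{M_k}^\star(\mat{X})\rangle = \langle \mat{M_k}(\vec{c}),\mat{X}\rangle$, which by the Gram-matrix identity equals the evaluation of the corresponding quotient polynomial at $\vec{x}$; testing against a basis of coefficient vectors shows $\mat{M_k}^\star(\vec{z_k}(\vec{x})\vec{z_k}(\vec{x})^T) = \vec{z_{2\kappa}}(\vec{x}) = \vec{y}$. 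Hence $\vec{y}\in\mathcal{S}_k$.

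For the nesting $\mathcal{S}_{k+1}\subset\mathcal{S}_k$, I would use the lifting map $\mat{L_{k+1}}$ from \cref{sec: Hierarchy} satisfying $\mat{L_{k+1}}\vec{z_{k+1}}\equiv_\ideal \vec{h}\otimes\vec{z_k}$, together with the recursions $\mat{M_{k+1}}(q)=\mat{L_{k+1}}^T(\Id_m\otimes\mat{M_k}(q))\mat{L_{k+1}}$. Given $\mat{X}\succeq 0$ feasible for the $(k{+}1)$-problem, the matrix $\mat{X}'\coloneq (\Id_m\otimes\text{(something)})$—more precisely, $\mat{X}' \coloneq \mathrm{Tr}_m\bigl(\mat{L_{k+1}}\mat{X}\mat{L_{k+1}}^T\bigr)$, the partial trace over the first tensor factor of the $md_k\times md_k$ matrix $\mat{L_{k+1}}\mat{X}\mat{L_{k+1}}^T$—is positive semidefinite and, using $\langle \mat{M_{k+1}}(q),\mat{X}\rangle = \langle \Id_m\otimes\mat{M_k}(q), \mat{L_{k+1}}\mat{X}\mat{L_{k+1}}^T\rangle = \langle \mat{M_k}(q), \mathrm{Tr}_m(\mat{L_{k+1}}\mat{X}\mat{L_{k+1}}^T)\rangle = \langle\mat{M_k}(q),\mat{X}'\rangle$, one checks that $\mat{X}'$ is feasible for the $k$-problem and maps to the same point of $\R^{d_{2\kappa}}$ under $\mat{M_k}^\star$. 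This shows $\mathcal{S}_{k+1}\subset\mathcal{S}_k$.

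The main obstacle is bookkeeping around the two distinct bases: the fixed basis $\vec{z_{2\kappa}}+\ideal$ used for the ambient space $\R^{d_{2\kappa}}$ of $\mathcal{C}$, versus the running basis $\vec{z_k}+\ideal$ used to form $\mat{M_k}$. One must be careful that $\mat{M_k}(\cdot)$ is consistently viewed as the \emph{same} linear map $\R^{d_{2\kappa}}\to\S^{d_k}$ for all $k$ (so that the adjoints $\mat{M_k}^\star$ all land in the same $\R^{d_{2\kappa}}$), and that the partial-trace identity $\langle \Id_m\otimes \mat{A},\mat{Z}\rangle = \langle \mat{A},\mathrm{Tr}_m(\mat{Z})\rangle$ is applied with the correct ordering of Kronecker factors to match the convention $\mat{L_{k+1}}\vec{z_{k+1}}\equiv_\ideal \vec{h}\otimes\vec{z_k}$. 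Everything else is a direct verification using the Gram-matrix property and the Kronecker-product identities already invoked in the proof of \cref{theo: Main Theorem}.
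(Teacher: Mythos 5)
Your proposal is correct. The first containment $\mathcal{C}\subset\mathcal{S}_k$ is argued exactly as in the paper: the rank-one lift $\mat{X}=\vec{z_k}(\vec{x})\vec{z_k}(\vec{x})^T$, feasibility via the Gram property of $\mat{M_k}(1)$, and identification of $\mat{M_k}^\star(\mat{X})$ with $\vec{z_{2\kappa}}(\vec{x})$ by testing against the basis vectors $\vec{e_i}$ and using that $\mat{M_k}(\vec{e_i})$ is a Gram matrix for $z_{2\kappa,i}$.

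For the nesting $\mathcal{S}_{k+1}\subset\mathcal{S}_k$ you take a genuinely different route. The paper disposes of this in one line by invoking \cref{theo: Main Theorem}: the bounds $\lambda_{\min}(\mat{M_k}(p),\mat{M_k}(1))$, which equal the minima of the corresponding linear functionals over $\mathcal{S}_k$, are non-decreasing in $k$ for every $p$; since every $\vec{c}\in\R^{d_{2\kappa}}$ arises as some $\vec{c}(q)_{2\kappa}$ and the spectratopes are compact convex sets, nestedness of all support values forces $\mathcal{S}_{k+1}\subset\mathcal{S}_k$ (a convex-duality step the paper leaves implicit). You instead give a constructive primal argument: from a feasible $\mat{X}$ at level $k+1$ you build $\mat{X}'=\mathrm{Tr}_m\bigl(\mat{L_{k+1}}\mat{X}\mat{L_{k+1}}^T\bigr)$ via the partial trace, and the adjoint identity $\langle \Id_m\otimes\mat{A},\mat{Z}\rangle=\langle\mat{A},\mathrm{Tr}_m(\mat{Z})\rangle$ together with the recursion $\mat{M_{k+1}}(q)=\mat{L_{k+1}}^T(\Id_m\otimes\mat{M_k}(q))\mat{L_{k+1}}$ shows $\mat{X}'$ is feasible at level $k$ with the same image under the adjoint. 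This is valid (the partial trace of a positive semidefinite matrix is positive semidefinite), and it buys something the paper's argument does not: an explicit map between dual feasible points, with no appeal to support functions, compactness, or the equality in \cref{OP: Moment side relax}. The one point you should make explicit is that the recursion defining $\mat{M_{k+1}}$ holds for \emph{every} coefficient vector, not just for $p$ and $1$; this follows from the linearity of $\mat{M_\kappa}(\cdot)$ in Methods 1 and 2 together with the non-iterative construction \cref{eq: noniterative construction}, so it is consistent with how the linear map $\mat{M_k}:\R^{d_{2\kappa}}\to\S^{d_k}$ is defined in \cref{sec: Derive Moment Relaxation}, but it deserves a sentence.
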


\begin{proof} Let $\vec{y}\in \mathcal{C}$ and $\vec{x}\in V_{\R}(\ideal)$ be such that $\vec{y}=\vec{z_{2\kappa}}(\vec{x})$. Consider the matrix $\mat{X}=\vec{z_k}(\vec{x})\vec{z_k}(\vec{x})^T$. We have that $\mat{X}\succeq 0$ and $\langle\mat{M_k}(1), \mat{X}\rangle=1$, as $\vec{x}\in V_{\R}(\ideal)$ and $\mat{M_k}(1)$ is a Gram matrix for $1$ with respect to the basis $\vec{z_k}+\ideal$. Moreover, computing the $i$-th entry of the image $\mat{M_k}^\star(\mat{X})$ yields $$\langle\mat{M_k}^\star(\mat{X}),\vec{e_i}\rangle=\langle\mat{X}, \mat{M_k}(\vec{e_i})\rangle=z_{2\kappa,i}(\vec{x})=y_i,$$ where the second equality follows from the fact that $\mat{M_k}(\vec{e_i})$ is a Gram matrix for the $i$-th entry $z_{2\kappa,i}$ of  $\vec{z_{2\kappa}}$ with respect to $\vec{z_k}+\ideal$. We conclude that $\vec{y}=\mat{M_k}^\star(\mat{X})\in \mathcal{S}_k$, which implies that $\mathcal{C} \subset \mathcal{S}_k$.  To conclude that $\mathcal{S}_{k+1} \subset \mathcal{S}_k$, we observe that the bounds $\lambda_{\min}(\mat{M_k}(p), \mat{M_k}(1))$ are non-decreasing in $k$ for all $p$ (\cref{theo: Main Theorem}).
\end{proof}


\subsection{Outer Approximations to \texorpdfstring{$\mbox{conv}(V_{\R}(\ideal))$}{conv(V	extsubscript{R}(I))}} \label{sec: Lambda Bodies}

When the objective polynomial of the POP (\ref{POP}) is linear, it is natural to study the convex hull of its feasible set $V_\R(\ideal)$, denoted $\mbox{conv}(V_\R(\ideal))$, as linear optimization over a set is equivalent to linear optimization over its convex hull. As we will explain next, the ideas from \cref{sec: Derive Moment Relaxation} give rise to a nested sequence of spectratope outer approximations of $\mbox{conv}(V_\R(\ideal))$ for any ideal $\ideal\subset\R[\vec{x}]$ that is amenable to spectral relaxations (see Section \ref{sec: Characterization}).  Throughout, we let $\ideal\subset\R[\vec{x}]$ denote an ideal such that $\R[\vec{x}]/\ideal=\mbox{Alg}_{\ideal}(h_1, \dots, h_m)$ for some $\ideal$-spherical polynomials $h_1, \dots, h_m\in \R[\vec{x}]$. For simplicity, we also assume that the elements $1+\ideal, x_1+\ideal, \dots, x_n+\ideal$ of $\R[\vec{x}]/\ideal$ are linearly independent.\footnote{If not, then at least one of the indeterminates $x_i$ is expressible mod $\ideal$ as an affine function of the others, and one could work within a smaller polynomial ring.} We let $\tau\in\N$ denote the smallest integer $k$ for which the subspace $U_{2k,\ideal}(\vec{h})$ contains the equivalence class $f+\ideal$ for every linear polynomial $f\in\R[\vec{x}]$.

Given a basis $\vec{z_{2\tau}}+\ideal$ for $U_{2\tau,\ideal}(\vec{h})$, our spectral relaxation hierarchy produces a spectratope outer approximation $$\{\vec{z_{2\tau}}(\vec{x}) : \vec{x}\in V_{\R}(\ideal)\}\subset\mathcal{S}_k,$$ for each $k\geq \tau$ (see \cref{sec: Derive Moment Relaxation}). 
Here $\mathcal{S}_k$ is defined as in \cref{OP: Moment relax 2} for the appropriate linear map $\mat{M_k}:\R^{d_{2\tau}}\to\S^{d_k}$.  Because $U_{2\tau,\ideal}(\vec{h})$ contains all linear functions mod $\ideal$, there exists a unique matrix $\mat{P}\in\R^{n\times d_{2\tau}}$ such that $\vec{x}\equiv_\ideal \mat{P}\vec{z_{2\tau}}$. One can check that the spectratope $\mat{P}(\mathcal{S}_k)$ contains $V_\R(\ideal)$, thus providing a convex outer approximation.  In particular, if we choose $\vec{z_{2\tau}}$ to contain the indeterminates $x_1, \dots, x_n$, the map $\mat{P}$ is a coordinate projection and we have that $V_\R(\ideal)$ is contained in the spectratope given by the projection of $\mathcal{S}_k$ onto the corresponding $n$ coordinates. 

\begin{example}\label{ex: outer approx}
    Consider the principal ideal $\mathcal{I}=\langle x^2 + 2y^2 - 2x^3 y - 2x^2 y^2 + 2x^4 y^2 - 4\rangle$, which is amenable to spectral relaxations with the $\ideal$-spherical polynomials $h_1=\frac{1}{2}(y-x^2y), \; h_2=\frac{1}{2}(x-x^2y), \; \mbox{and } h_3=\frac{1}{2}y$ generating $\R[\vec{x}]/\ideal$ as an algebra. \cref{fig: spectratope_OA} (left) displays the spectratope outer approximations to the convex hull of the variety $V_\R(\ideal)$ obtained from the first four levels of our hierarchy. 
\end{example}

\setlength{\textfloatsep}{0pt}
\begin{figure}[htbp]
  \centering
  \begin{subfigure}[b]{0.45\textwidth}
    \centering \includegraphics[width=\linewidth]{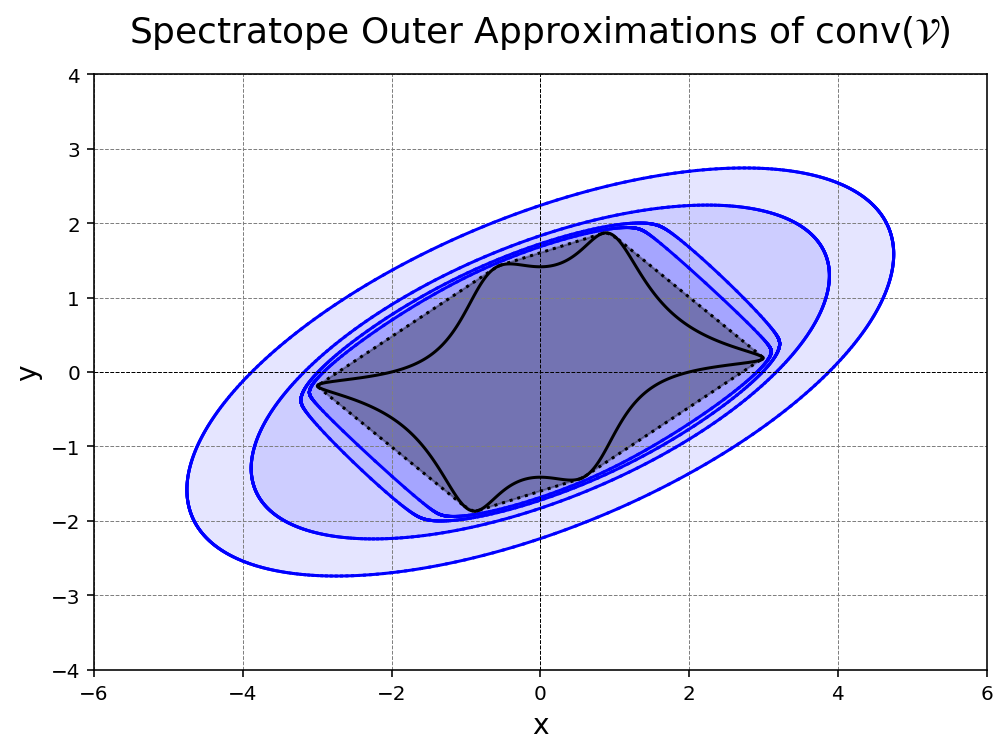} 
  \end{subfigure}\hfill
  \begin{subfigure}[b]{0.47\textwidth}
    \centering \includegraphics[width=\linewidth]{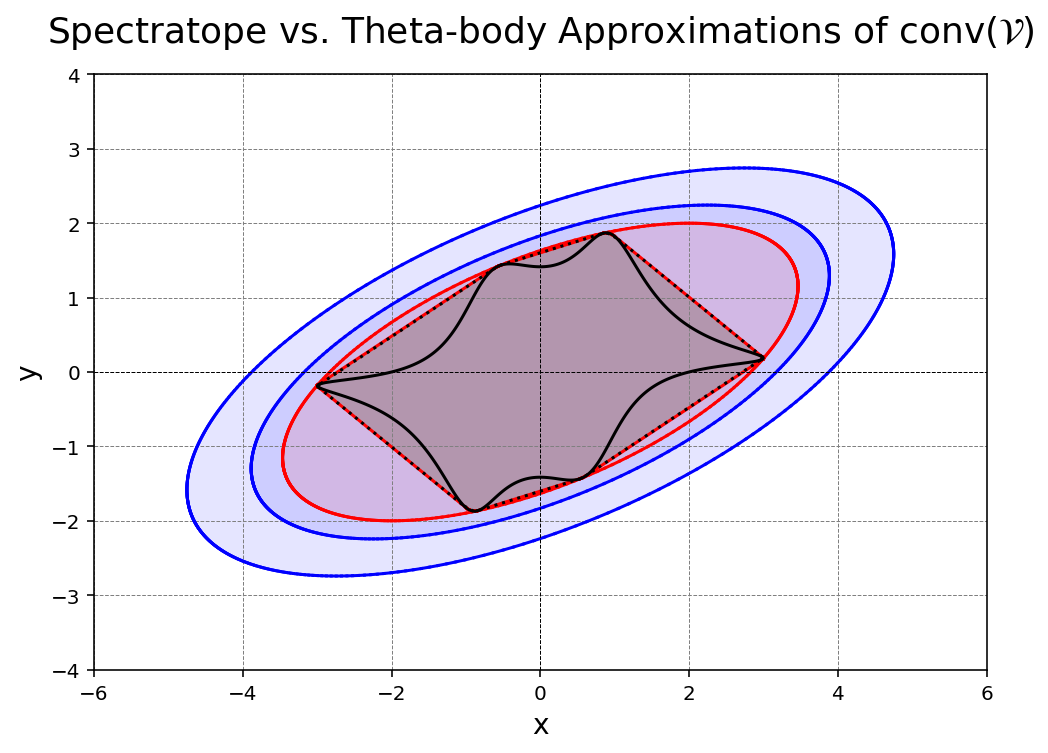}
  \end{subfigure}\hfill
  \captionsetup{font=footnotesize} 
  \captionsetup{skip=5pt}
  \caption{Outer approximations of the convex hull of $\mathcal{V}\coloneq V_\R( \langle x^2 + 2y^2 - 2x^3 y - 2x^2 y^2 + 2x^4 y^2 - 4 \rangle))$, shown as a grey region bounded by a black dotted line. \textit{Left:} Spectratopes corresponding to the first four levels of the spectral relaxation hierarchy. \textit{Right:} First- and second-level spectratope approximations~(blue), together with the theta bodies derived from the corresponding SOS relaxations~(red).}
  \label{fig: spectratope_OA}
\end{figure}

Recall that the closure of $\mbox{conv}(V_\R(\ideal))$ is characterized by the set of all closed halfspaces containing $V_\R(\ideal)$:
\begin{equation*}
\mbox{cl}(\mbox{conv}(V_\R(\ideal)))=\{\vec{x}\in \R^n: f(\vec{x}) \geq 0, ~\forall f \text{ affine and nonnegative on } V_\R(\ideal)\}.
\end{equation*}
Therefore, one can obtain tractable outer approximations to $\mbox{conv}(V_\R(\ideal))$ by restricting to subsets of affine functionals that can be efficiently certified as being nonnegative on $V_\R(\ideal)$. In~\cite{thetabodies}, the authors produce a sequence of semidefinite representable outer approximations to $\mbox{conv}(V_\R(\ideal))$, known as the \emph{theta bodies} of the ideal $\ideal$, by considering affine functionals that are SOS of a fixed degree modulo $\ideal$. Similarly, the spectratope outer approximations described in this work arise from considering affine functionals $f$ whose non-negativity over $V_\R(\ideal)$ can be certified by means of the inequality $\lambda_{\min}(\mat{M_k}(f),\mat{M_k}(1))\geq 0$. The bounds $\lambda_{\min}(\mat{M_k}(f),\mat{M_k}(1))$ are independent of the choice of basis with respect to which the matrices $\mat{M_k}(\cdot)$ are defined (\cref{cor: indep of basis k>=kappa}), but they depend on the choice of $\ideal$-spherical polynomials $\vec{h}$; the resulting spectratope outer approximations inherit these properties.  Building on \cref{Rem: SOS relax}, we note here that in general a spectratope approximation is weaker than the corresponding theta body approximation, as illustrated in Figure~\ref{fig: spectratope_OA}.

\FloatBarrier

\section{Numerical Experiments} \label{sec: Experiments}

We evaluate the performance of our spectral relaxation hierarchy on three different problems: the maximum-cut problem for a graph (\cref{sec: Max-Cut}), estimating the distance to an algebraic variety (\cref{sec: BDTV}), and computing the spectral norm of an order-$d$ tensor (\cref{sec:TSN}). Our primary comparison is with the corresponding SOS hierarchy, defined according to \cref{Rem: SOS relax}. While SOS relaxations are inherently tighter, our experiments show that spectral relaxations offer substantially better scalability, handling much larger problem instances.\footnote{Experiments were run on a 2.4 GHz quad‑core Intel Core i5 system with 8 GB of RAM. Optimization problems were solved using MOSEK \cite{mosek} via CVXPY \cite{cvxpy}. Eigenvalue problems were solved using SciPy’s \texttt{eigsh} routine, which wraps ARPACK’s implicitly‑restarted Lanczos method \cite{Lehoucq1998}.}


\subsection{Maximum-Cut} \label{sec: Max-Cut} 

The maximum-cut problem on an undirected graph $\mathcal{G}$ with $n$ vertices and adjacency matrix $\mat{A} \in \mathbb{S}^n$ consists of finding a partition of its vertex set into two disjoint subsets so as to maximize the number of edges between them. It admits the following formulation as a POP over the hypercube $\{\pm 1\}^n$:

\begin{equation}\label{OP: MAXCUT} \text{MAX-CUT}(\mathcal{G}) =\frac{1}{2} \left(\langle \mat{A}, \boldsymbol{1}\boldsymbol{1}^T\rangle -  \min_{\vec{x}\in\{\pm 1\}^n} \vec{x}^T\mat{A}\vec{x}\right) ,\end{equation} where $\boldsymbol{1}\in\R^n$ denotes the all ones vector. The ideal $\ideal=\langle x_1^2-1, \dots, x_n^2-1 \rangle$, with $V_\R(\ideal)=\{\pm 1\}^n$, is amenable to spectral relaxations via the $\ideal$-spherical polynomials $h_i=x_i/\sqrt{n}$, for $i=1,\dots,n$. Moreover, one can verify that the given generators for $\ideal$ form a Gröbner basis for the ideal, which yields the basis $\mathcal{B} = \{\vec{x}^{\vec{\alpha}} + \ideal : \vec{\alpha} \in \{0,1\}^n\}$ for the quotient ring $\R[\vec{x}]/\ideal$. This facilitates the practical construction of a spectral relaxation hierarchy for \cref{OP: MAXCUT} via \cref{alg: Iterative Construction}. 

Both initialization methods from \cref{sec: Initial SR} yield the Gram matrices $\mat{M_1}(1)=\frac{1}{n}\Id_n$ and $\mat{M_1}(p)=\mat{A}$ for the polynomials $1$ and $p=\vec{x}^T\mat{A}\vec{x}$ with respect to the basis $\vec{x}+\ideal$ for $U_{1,\ideal}(\vec{h})$, with $\mat{A}$ being the unique feasible choice for $\mat{M_1}(p)$. Given the natural choice of basis $(1,x_ix_j)_{1\leq i <j\leq n}+\ideal$ for $U_{2,\ideal}(\vec{h})$, the second level of the hierarchy also admits a closed-form expression: \begin{equation*}
\mat{M_2}(1)_{f_1,f_2} = 
{\begin{cases}
1/n \; , \;  & f_1=f_2=1 \\
2/n^2  \;, \;  & f_1=f_2\neq 1\\
0  \; , \;  &  \text{o.w.} \\
\end{cases}}, \;\; \mat{M_2}(p)_{f_1,f_2} = 
{\begin{cases}
2A_{ij}/n  \; , \;  &\{f_1,f_2\}=\{1,x_ix_j\} \\
A_{kl}/n  \;, \;  & f_1=x_ix_k\neq x_ix_l=f_2 \\
0  \; , \;  &  \text{o.w.} \\
\end{cases}}
\end{equation*} 
Observe that the matrices $\mat{M_2}(\cdot)$ are sparse even when $\mat{A}$ is not. Such sparsity is a common feature of our spectral relaxations and it can be exploited by standard solvers to accelerate computations.

\cref{tab:sr_vs_sos_time,tab:sr_vs_sos_bounds} compare the first and second levels of our spectral hierarchy with the corresponding SOS relaxations for the maximum-cut problem on Erdös-Rènyi random graphs of varying sizes. As anticipated, spectral relaxations accommodated larger problem instances than their SOS counterparts and generated bounds orders of magnitude faster. Despite this significant computational advantage, the difference in bound quality remained modest for this example. We compute relative differences between our spectral bounds and the first SOS bound, calculated as $(\mbox{SR}_{\mbox{bound}}-\mbox{SOS}_{\mbox{bound}})/\mbox{SOS}_{\mbox{bound}}$, over $100$ random instances of \cref{OP: MAXCUT} and for graph sizes $n$ for which a first SOS bound could be computed within reasonable time. \cref{tab:sr_rel_diff} reports the average relative difference for each problem size and spectral relaxation level. Notably, all values remain below 2.51\% and decrease as $n$ increases.

Alternative approaches to address the limitations of SOS methods — specifically the DSOS and SDSOS hierarchies \cite{Ahmadi2019} — yield the trivial bound $\frac{1}{2}\langle \mat{A}, \boldsymbol{1}\boldsymbol{1}^T\rangle$ at the first level of those hierarchies. (This issue persists even when these methods are adapted to the quotient ring perspective that we have adopted in this paper, i.e., when the positive-semidefinite constraint in \cref{Rem: SOS relax} is replaced by a diagonally dominant or scaled diagonally dominant constraint.)

\setlength{\textfloatsep}{0pt}
\begin{table}[H]
\centering
\small
\setlength{\tabcolsep}{3pt}
\begin{tabular}{|l|r|r|r|r|r|r|r|r|}
\hline
Method & $n=25$ & $n=50$ & $n=100$ & $n=250$ & $n=500$ & $n=1250$ & $n=1500$ & $n=40000$ \\
\hline
SR, $k=1$   & 0.002 & 0.003 & 0.002 & 0.009 & 0.039   & 0.430    & 0.624   & 9892 \\
SR, $k=2$   & 0.052 & 0.235 & 1.583 & 25.38  & 320.4  & 79278   & X      & X \\
SOS, $k=1$  & 0.362 & 1.494 & 11.69  & 7127  & X      & X       & X      & X \\
SOS, $k=2$  & 23332 & X     & X     & X     & X      & X       & X      & X \\
\hline
\end{tabular}
\captionsetup{font=footnotesize} 
\captionsetup{skip=5pt}
\caption{Time (seconds) to compute the 1st and 2nd spectral (SR) and SOS bounds on the maximum-cut value of an Erdös-Rènyi random graph of size $n$ with edge probability $\rho=0.7$. An X denotes exceeded memory limits.}
\label{tab:sr_vs_sos_time}
\end{table}

\begin{table}[H]
\centering
\small
\setlength{\tabcolsep}{3pt}
\begin{tabular}{|l|r|r|r|r|r|r|r|r|}
\hline
Method & $n=25$ & $n=50$ & $n=100$ & $n=250$ & $n=500$ & $n=1250$ & $n=1500$ & $n=40000$ \\
\hline
SR, $k=1$   & 257.7  & 1022 & 3930   & 23600  & 92412   & 567449  & 814364  & 5.64e+08 \\
SR, $k=2$   & 255.9  & 1020 & 3924   & 23589  & 92398   & 567423  & NA      & NA      \\
SOS, $k=1$  & 251.4  & 1008 & 3889   & 23484  & NA      & NA      & NA      & NA      \\
SOS, $k=2$  & 244.0  & NA     & NA     & NA     & NA      & NA      & NA      & NA      \\
\hline
\end{tabular}
\captionsetup{font=footnotesize} 
\captionsetup{skip=5pt}
\caption{Upper bounds on the maximum-cut value of an Erdős–Rényi random graph of size $n$ with edge probability $\rho = 0.7$, computed using the first two levels of the spectral relaxation (SR) hierarchy and the corresponding SOS hierarchy.} 
\label{tab:sr_vs_sos_bounds}
\end{table}

\begin{table}[H]
\centering
\small
\setlength{\tabcolsep}{3pt}
\begin{tabular}{|l|r|r|r|}
\hline
Method & $n=25$ & $n=50$ & $n=100$ \\
\hline
SR, $k=1$   & 2.51\% & 1.74\% & 1.16\% \\
SR, $k=2$   & 1.86\% & 1.42\% & 1.01\% \\
\hline
\end{tabular}
\captionsetup{font=footnotesize} 
\captionsetup{skip=5pt}
\caption{Relative difference (\%) between the 1st and 2nd spectral bounds and the 1st SOS bound on the maximum-cut value of Erdős–Rényi random graphs (with edge probability $\rho = 0.7$) of size $n$, averaged over 100 instances.}
\label{tab:sr_rel_diff}
\end{table}

\FloatBarrier
\subsection{Bounding the distance to a variety} \label{sec: BDTV}
For a bounded variety $V_\R(\ideal)\subset\R^n$ and a fixed point $\hat{\vec{x}}\in \R^n\setminus V_\R(\ideal)$, consider the problem of lower bounding the distance $d(\hat{\vec{x}},V_\R(\ideal))$ between $\hat{\vec{x}}$ and $V_\R(\ideal)$. Problems of this form arise in engineering applications such as robust bifurcation analysis and robust control, where guarantees of a minimal distance between the nominal parameters of a given system and a problematic set of values are of interest (see~\cite[p.~93]{ParriloTesis},~\cite[p.~81]{blekherman2013semidefinite} and references therein). Given access to the radius $R$ of a Euclidean ball containing $V_\R(\ideal)$, one can derive lower bounds on $d(\hat{\vec{x}},V_\R(\ideal))$ by applying \cref{alg: Iterative Construction} to the POP:
\begin{equation*}
\begin{aligned}
    d(\hat{\vec{x}},V_\R(\ideal))=\min_{(\vec{x},y)\in \R^{n+1}} \; \norm{\vec{x}-\hat{\vec{x}}}^2 \; \mbox{s.t.} \; \; & q_1(\vec{x})= \dots =q_s(\vec{x})=0\\
    & R^2-\norm{\vec{x}}^2-y^2=0,
\end{aligned}
\end{equation*}
with the choice of $\ideal$-spherical polynomials $\frac{1}{\sqrt{R^2+1}}\{1, x_1,\dots,x_n,y\}$.

For illustration, we compute lower bounds on the distance from six points in $\R^2$ to the variety $\mathcal{V}\coloneq V_\R(\ideal)$ of the principal ideal $\ideal=\langle 8(x^{4} + y^{4}) - 10(x^{2} + y^{2}) + 6 x^{2} y^{2} + 3 \rangle$, which is contained in the ball of radius $R=\sqrt{1.5}$. \cref{fig:BDTV} shows the best bounds obtained from \cref{alg: Iterative Construction} (initialized using Methods 1 and 2) and from the corresponding SOS hierarchy after $0.01, \;0.02,\text{ and } 0.04$ seconds of runtime. When available, SOS bounds are typically tighter than spectral bounds for the same runtime budget, as achieving comparable quality with spectral relaxations requires solving eigenvalue problems of much larger dimension. Nevertheless, within just $0.01$ seconds, both spectral hierarchies produced meaningful bounds for five out of the six points, whereas the SOS hierarchy failed to produce any. 

\setlength{\textfloatsep}{4pt}
\begin{figure}[H]
  \centering
  \begin{subfigure}[b]{0.33\textwidth}
    \centering \includegraphics[width=\linewidth,height=\linewidth]{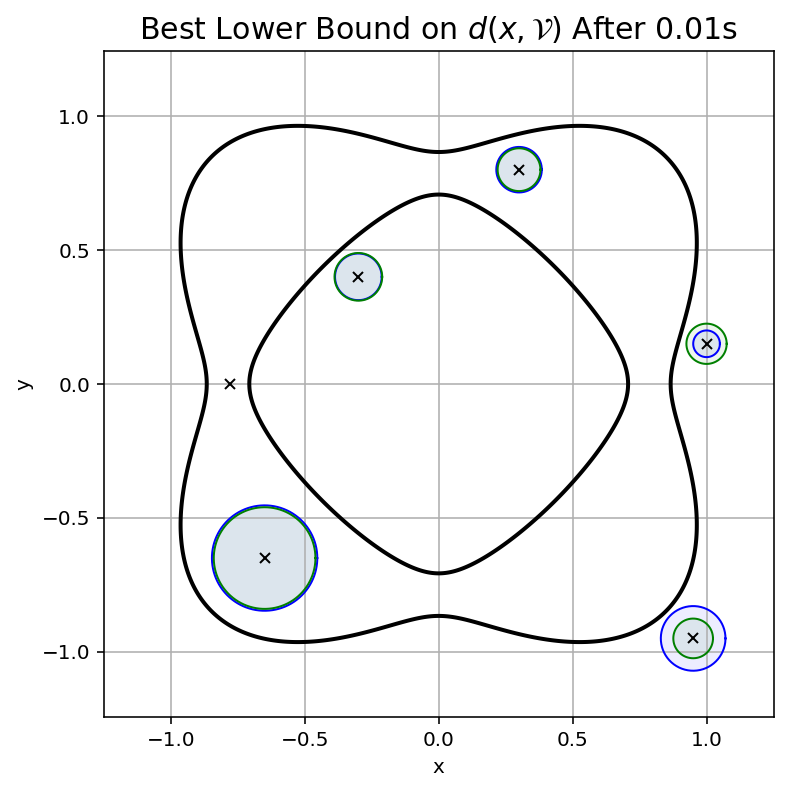} 
  \end{subfigure}\hfill
  \begin{subfigure}[b]{0.33\textwidth}
    \centering \includegraphics[width=\linewidth,height=\linewidth]{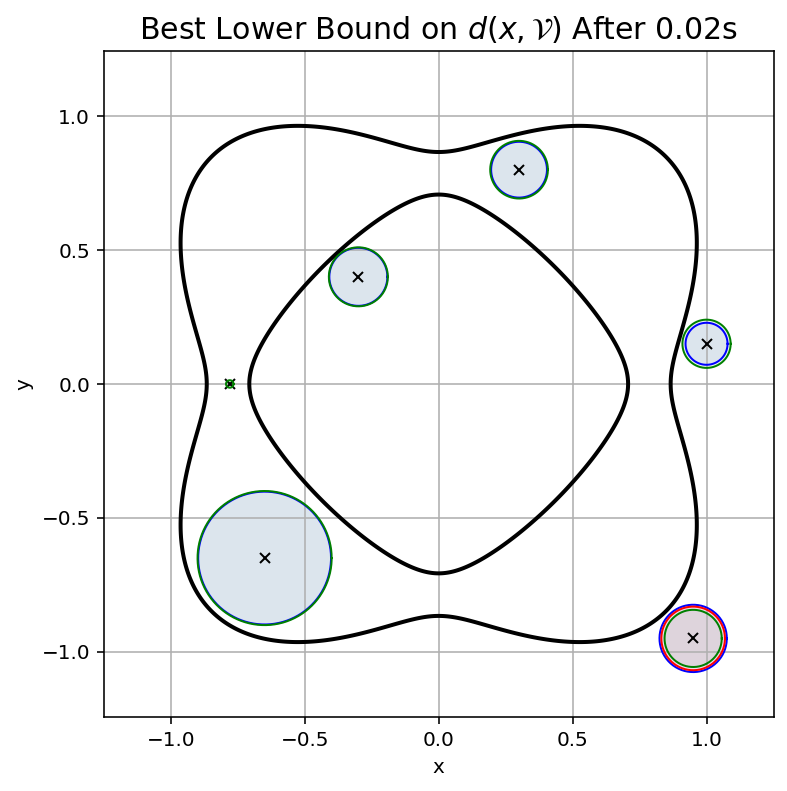}
  \end{subfigure}\hfill
  \begin{subfigure}[b]{0.33\textwidth}
    \centering \includegraphics[width=\linewidth,height=\linewidth]{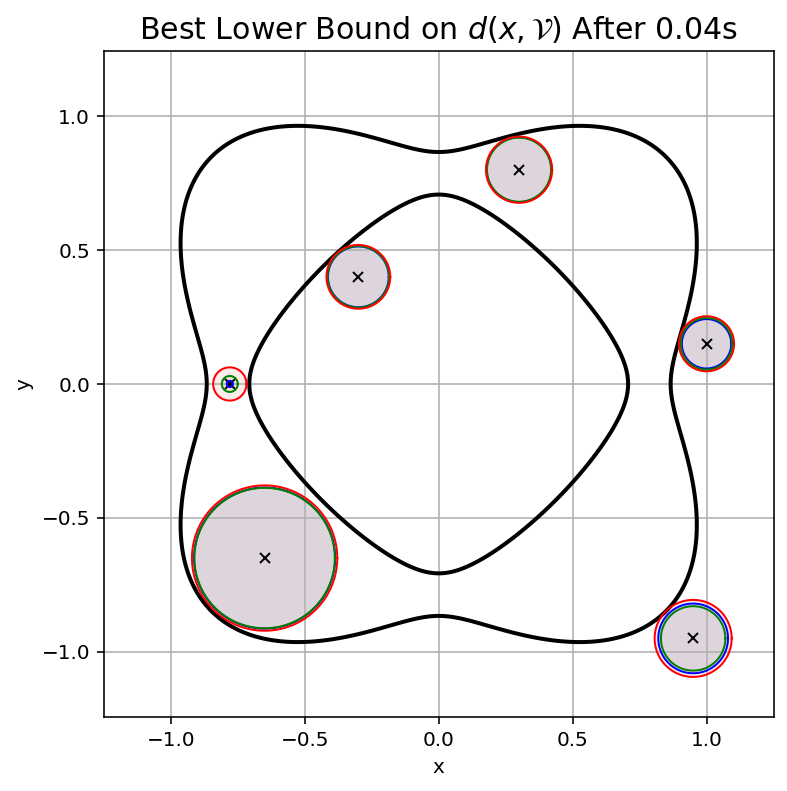}
  \end{subfigure}
  \captionsetup{font=footnotesize} 
  \captionsetup{skip=5pt}
  \caption{Radii representing the best lower bounds on $d(\cdot, \mathcal{V})$ obtained from our spectral hierarchy with initialization Methods 1 (blue) and 2 (green), as well as the corresponding SOS hierarchy (red) after 0.01s, 0.02s, and 0.04s of computation.}
  \label{fig:BDTV}
\end{figure}

To obtain \cref{fig:BDTV}, we precomputed the matrices $\mat{L_{k}}$ and $\mat{M_{k}}(1)$ from \cref{alg: Iterative Construction} -- which depend only on the ideal $\ideal$ -- up to level $k=50$, and then derived spectral hierarchies for each of the displayed points. This highlights an advantage of our construction: when solving several POPs over the same constraint set, a one-time pre-computation step can be leveraged to generate bounds more efficiently.

\FloatBarrier

\subsection{Tensor Spectral Norm}\label{sec:TSN}

Consider the problem of computing the tensor spectral norm of an order-$d$ tensor $\mat{T}\in \R^{n_1\times\dots\times n_d}$, which admits the following formulation:
\begin{equation} \label{eq:TSN}
\norm{\mat{T}}_{\sigma} =\max_{\mat{X}\in\R^{n_1\times\dots\times n_d}} \; \langle \mat{T},\mat{X}\rangle \;\; \mbox{s.t.} \;\; \norm{\mat{X}}_{F}^2=1, \; \mbox{rank}(\mat{X})=1.
\end{equation}
Here $\norm{\mat{X}}_F^2\coloneq\sum_{i_1=1}^{n_1}\dots\sum_{i_d=1}^{n_d} X_{i_1i_2\dots i_d}^2$ denotes the squared Frobenius norm of the tensor $\mat{X}$. The problem \cref{eq:TSN} corresponds to a POP of the form \cref{POP}, where the ideal $\ideal$ is generated by the unit norm constraint $\norm{\mat{X}}_{F}^2=1$ together with all $2\times2$ minors of every matricization of the tensor $\mat{X}$, which vanish simultaneously if and only if $\mat{X}$ has rank one~\cite{Rauhut2020}. This formulation is amenable to our framework of spectral relaxations, with the indeterminates $X_{i_1i_2\dots i_d}$ and the polynomial $1$, appropriately normalized, serving as the set of $\ideal$-spherical polynomials. Moreover,~\cite{Rauhut2020} provides a reduced Gröbner basis for the ideal $\ideal$, which facilitates the computation of a basis for $\R[\mat{X}]/\ideal$ and enables computations mod $\ideal$. 

We derive bounds on the tensor spectral norm of low-rank, order-3 tensors using the first two levels of our spectral relaxation hierarchy, and compare them against the bounds produced by the first-level SOS, DSOS, and SDSOS relaxations (see \cref{tab:TSN_time,tab:TSN_bounds}). Consistent with our maximum-cut experiments (see \cref{sec: Max-Cut}), we observe that spectral relaxations can be solved for larger problem sizes and require substantially less runtime. The resulting bounds consistently outperform those of the first-level DSOS hierarchy and match exactly those of the first-level SDSOS relaxation. The agreement between our first-level spectral relaxation and the first-level SDSOS relaxation arises because both bounds are given by the Frobenius norm; our second-level spectral relaxation provides a tighter bound.

\setlength{\textfloatsep}{0pt}
\setlength{\tabcolsep}{4pt}
\begin{table}[htbp]
\centering
\small
\begin{tabular}{|l|r|r|r|r|r|}
\hline
\textbf{$n_1\! \times\! n_2\! \times\! n_3$} & SR, $k=1$ & SR, $k=2$ & DSOS & SDSOS & SOS \\
\hline
5×5×5         & 0.0018 & 0.008  & 47.8    & 57.6    & 20.1    \\
10×5×5        & 0.0019 & 0.039  & 817     & 540     & 5664    \\
15×5×5        & 0.0020 & 0.086  & 4608    & X       & X       \\
10×10×5       & 0.0025 & 0.205  & 16146   & X       & X       \\
15×10×5       & 0.0027 & \(\infty\) & \(\infty\) & \(\infty\) & \(\infty\) \\
50×50×50      & 0.218  & \(\infty\) & \(\infty\) & \(\infty\) & \(\infty\)    \\
100×100×100   & 2.11  & \(\infty\) & \(\infty\) & \(\infty\) & \(\infty\)      \\
\hline
\end{tabular}
\captionsetup{font=footnotesize}
\caption{Time (seconds) to obtain bounds on the spectral norm of a random order-3 tensor of size $n_1\! \times\! n_2\! \times\! n_3$ with CP rank $r = \lfloor \min(n_1,n_2,n_3)/2 \rfloor$\tablefootnote{Each tensor is generated as a sum of $r$ rank-one tensors, each formed by outer products of normalized random vectors with uniform $[-1,1]$ entries, scaled by $10/r$ to keep the spectral norm below $10$.} using the 1st and 2nd levels of the spectral relaxation hierarchy with initialization Method 2 (SR), as well as the 1st-level DSOS, SDSOS, and SOS relaxations. An X denotes exceeded memory limits and $\infty$ indicates that the relaxation setup alone exceeded 4 days of runtime and was therefore terminated.}
\label{tab:TSN_time}
\end{table}

\begin{table}[htbp]
\centering
\small
\setlength{\tabcolsep}{3pt}
\begin{tabular}{|l|r|r|r|r|r|}
\hline
\textbf{$n_1\! \times\! n_2\! \times\! n_3$} & SR, $k=1$ & SR, $k=2$ & DSOS & SDSOS & SOS  \\
\hline
5×5×5         & 7.11       & 6.67       & 34.44        & 7.11         & 5.52       \\
10×5×5        & 7.05       & 6.38       & 43.59        & 7.05         & 4.98       \\
15×5×5        & 7.32       & 6.99       & 52.91        & NA           & NA         \\
10×10×5       & 7.39       & 6.98       & 55.05        & NA           & NA         \\

15×10×5        & 6.95   & NA         & NA           & NA           & NA         \\
50×50×50      & 2.00       & NA         & NA           & NA           & NA         \\
100×100×100   & 1.41       & NA         & NA           & NA           & NA         \\

\hline
\end{tabular}
\captionsetup{font=footnotesize}
\caption{Upper bounds on the spectral norm of a random order-3 tensor of size $n_1 \! \times\! n_2 \!\times n_3$ with CP rank $r = \lfloor \min(n_1,n_2,n_3)/2 \rfloor$, obtained from the 1st and 2nd levels of the spectral relaxation hierarchy with initialization Method 2 (SR), as well as the 1st-level DSOS, SDSOS, and SOS relaxations.} 
\label{tab:TSN_bounds}
\end{table} 

The convex hull of the constraint set of \cref{eq:TSN}
coincides with the unit ball of the tensor nuclear norm, which is dual to the spectral norm. From this perspective, the duals of the spectral relaxations we derive for \cref{eq:TSN} correspond to minimizing the linear functional $\langle \mat{T}, \cdot \rangle$ over spectratope outer approximations of the tensor nuclear norm ball (see \cref{sec: Dual Perspective}). This contrasts with the work of~\cite{Rauhut2020}, in which theta body (SDP-representable) outer approximations of the same unit ball are derived and employed for low-rank tensor recovery.


\FloatBarrier
\section{Future directions}

Our work suggests several avenues for future research:
\smallskip

\noindent {\bf Theoretical guarantees:} 
Understanding the convergence of our spectral relaxation hierarchy beyond the case of homogeneous minimization over the sphere remains an open problem. 

\smallskip
\noindent {\bf Computational considerations:} Our methods rely on access to a nontrivial SOS representation $1\equiv_{\ideal}\sum_{i=1}^m h_i^2$, where $\R[\vec{x}]/\ideal=\mbox{Alg}_\ideal(h_1,\dots, h_m)$. It remains to be understood how one can directly construct such a family of polynomials, given an ideal $\ideal$, and whether one can select them to optimize the computation or the quality of the resulting spectral bounds.  It is also of interest to explore specialized eigensolvers that exploit the structural properties of the matrices arising in our construction.

\smallskip
\noindent {\bf Alternative spectral relaxations:} The matrices underlying our spectral relaxations for the POP defined by $p$ and $\ideal$ depend linearly on the equivalence class $p+\ideal$. Exploring alternative constructions involving nonlinear functions of the coefficients of $p$ could lead to stronger relaxations at a comparable computational cost.

\smallskip


\noindent {\bf Approximation ratios and rounding:} In many applications, it is of interest to find a good-quality feasible solution to a POP, rather than just a bound on its optimal value. This motivates the development of methods to extract feasible solutions from spectral relaxations with approximation guarantees.

\section*{Acknowledgments} We would like to thank Eitan Levin, Eliza O'Reilly, Ethan Epperly, Mauricio Velasco, and Mitchell Harris for helpful discussions. This work was supported in part by AFOSR grants FA9550-22-1-0225 and FA9550-
23-1-0070.

\bibliographystyle{plain}
\bibliography{references}
\end{document}